\newtheorem{theorem}{Theorem}
\newtheorem{lemma}{Lemma}
\newtheorem{proposition}{Proposition}
\newtheorem{corollary}{Corollary}
\theoremstyle{definition}
\newtheorem{conjecture}{Conjecture}
\newtheorem{Obs}{Observation}
\newtheorem*{theorem1}{Theorem \ref{Ntheorem}}
\newtheorem*{theorem2}{Theorem \ref{vector-space-theorem}}
\title{Cardinalities of $g$-difference sets }
\author{ Michael Tait\thanks{Research partially supported by NSF grant DMS-2245556.}\\ \small Department of  Mathematics \& Statistics\\ \small Villanova University\\ \small Villanova, Pennsylvania\\
\small {\tt michael.tait@villanova.edu }. %(optional)
\and
 Eric Schmutz\\
\small Department of Mathematics\\ \small Drexel University\\ \small Philadelphia, Pennsylvania\\ \small {\tt schmutze@drexel.edu}}  %(optional)
\begin{document}	
\maketitle																														
\begin{abstract}
Let $\eta_{g}(n) $ be the smallest cardinality  that $A\subseteq {\mathbb Z}$ can have  
if $A$ is a  $g$-difference basis for $[n]$ (i.e, if, for each $x\in [n]$,  there are {\sl at least} $g$ 
solutions to $a_{1}-a_{2}=x$ ). We prove that the finite, non-zero limit 
$\lim\limits_{n\rightarrow \infty}\frac{\eta_{g}(n)}{\sqrt{n}}$ exists, answering a question of Kravitz. We also
 investigate a similar problem in the setting of a vector space over a finite field.

Let  $\alpha_g(n)$  be the largest cardinality that $A\subseteq [n]$ can have
if, for all nonzero $x$, $a_{1}-a_{2}=x$ has {\em at most} $g$ solutions.
We also prove that  $\alpha_g(n)={\sqrt{gn}}(1+o_{g}(1))$ as $n\rightarrow\infty$.
\end{abstract}

%\pagestyle{myheadings}
%\markright{\smalltt INTEGERS: 24 (2024)\hfill}

%%%%%%%%%%%%%%%%%%%%%%%%%

\section{Introduction}
Suppose $n$ and $g$ are positive integers.
We say that \lq\lq {\it $A$ is a $g$-difference basis for $[n]$ }\rq\rq
if $A$ is a set of integers and, for each $x\in \lbrace 1,2,3\dots, n\rbrace$,  there
are at least $g$ solutions $(a_{1},a_{2})\in A\times A$ to 
the equation $a_{1}-a_{2}=x$. When $g=1$ we will use ``difference basis" to mean $1$-difference basis. It is a natural question to ask what the minimum size of a $g$-difference basis for $[n]$ is. More generally, for a subset $A$ of an abelian group $G$, one can define  the  representation function  $r_{A-A}:G\rightarrow {\mathbb Z}$ by
\[
r_{A-A}(d) = |\{(a, a') \in A \times A: d = a-a'\}|.
\]
For $g$ a  natural number and $S\subseteq G$, define 
 \[
 \eta_g(S) = \min \{|A|: A\subseteq  G, r_{A-A}(x) \geq g \mbox{ for all } x\in S\},
%\eta_g(S) = \min \{|A|: A\subseteq  [n], r_{A-A}(x) \geq g \mbox{ for all } x\in S\},
 \]
 and
\[
%\alpha_g(S) = \max \{|A|: A\subseteq  [n], r_{A-A}(x) \leq g \mbox{ for all } x\in S, x\not=0\}.
\alpha_g(S) = \max \{|A|: A\subseteq  G, r_{A-A}(x) \leq g \mbox{ for all } x\in S, x\not=0\}.
\]
When $G = \mathbb{Z}$ and $S = \{1,\cdots, n\}$, we will use the notation $\eta_g(n)$ and $\alpha_g(n)$ to mean $\eta_g(\{1,\cdots, n\})$ and $\alpha_g(\{1,\cdots, n\})$ respectively, following the notation of \cite{Kravitz-21-ActaArith} and \cite{Xu}. 

In  \cite{Kravitz-21-ActaArith},
Kravitz asserts that the main theorem in his paper can be strengthened, 
provided one can verify that  the  limit $\lim\limits_{n\rightarrow \infty}\frac{\eta_{g}(n)}{\sqrt{n}}$
%\begin{equation}
%\label{conjecture}
%\lim\limits_{n\rightarrow \infty}\frac{\eta_{g}(n)}{\sqrt{n}}
%\end{equation}
exists and is a finite non-zero real  number. He asks whether this limit exists in Question 7.4 of \cite{Kravitz-21-ActaArith} and writes that ``it seems very likely'' [that it does]. For the special case $g=1$, R\'edei and R\'enyi \cite{Redei-Renyi} proved 
that  the  limit does exist.
Leonid Mirsky's helpful MathSciNet review \cite{Mirsky} outlines the proof.
We show that this  proof can  be modified in such a way that it works for any $g$.  
Hence our first goal is to verify Kravitz's conjecture.

\begin{theorem}\label{Ntheorem}
The limit $\lim\limits_{n\rightarrow \infty}\frac{\eta_{g}(n)}{\sqrt{n}}$ exists and is a positive real number.
\end{theorem}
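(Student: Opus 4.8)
The plan is to establish the existence of the limit $\lim_{n\to\infty} \eta_g(n)/\sqrt{n}$ via a subadditivity-type argument, following the Rédei–Rényi/Mirsky strategy for $g=1$ and showing it survives the generalization to arbitrary $g$. The key structural fact to exploit is an approximate submultiplicativity: if $A$ is a $g$-difference basis for $[m]$ and $B$ is a $g$-difference basis for $[k]$, one should be able to combine them (via a dilation-and-sum construction such as $A + (m+1)B$ or a similar scaled translate) to produce a $g$-difference basis for $[mk]$ or $[n]$ with $n$ close to $mk$, whose cardinality is controlled by $|A|\cdot|B|$ or $|A| + |B|$ up to lower-order error. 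Concretely, I would first prove two-sided bounds showing $\eta_g(n) = \Theta(\sqrt{n})$ (the lower bound is immediate from counting: $|A|^2 \ge \binom{|A|}{2}\cdot 2 \gtrsim$ number of represented differences, so $|A|^2 \gtrsim gn$ gives $\eta_g(n) \ge (1+o(1))\sqrt{gn}$; the upper bound follows by taking $g$ disjoint shifted copies of a $1$-difference basis, giving $\eta_g(n) \le \sqrt{g}\cdot\eta_1(n)(1+o(1))$), which guarantees any limit, should it exist, is finite and nonzero.

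Next I would set $f(n) = \eta_g(n)$ and seek a Fekete-type inequality. The natural approach is to show that $\eta_g(n)/\sqrt{n}$ is asymptotically subadditive in the sense that for the quantity $L = \liminf_n \eta_g(n)/\sqrt{n}$ and $U = \limsup_n \eta_g(n)/\sqrt{n}$, one can force $U \le L$. The mechanism: given a near-optimal basis for a large $n$ realizing something close to $L\sqrt{n}$, one tiles or concatenates scaled copies to build bases for arbitrarily large $N$ with $\eta_g(N) \le L\sqrt{N}(1+o(1))$, which pins $\limsup$ down to $\liminf$. I would formalize the building block as follows: if $A \subseteq \mathbb{Z}$ is a $g$-difference basis for $[m]$ with $|A| = a$, then for any $t$, a set of the form $\bigcup_{i=0}^{t-1}\bigl(A + i\cdot(\text{something}\approx m)\bigr)$ serves as a $g$-difference basis for roughly $[tm]$, and careful bookkeeping of the overlap of difference sets (ensuring each target difference still gets $g$ representations, not losing the count at the "seams" between blocks) yields $\eta_g(tm) \lesssim t\cdot a$. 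Combined with $a \approx L\sqrt{m}$ this gives $\eta_g(tm) \lesssim L t\sqrt{m} = L\sqrt{m}\cdot\sqrt{tm}/\sqrt{m}\cdot\sqrt{t}$, and optimizing the relationship between the scaling factor and the block size is what produces the matching upper bound on the limsup.

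The main obstacle I expect is precisely the seam analysis in this concatenation: for $g=1$ the Rédei–Rényi construction is clean because one only needs each difference represented once, so a product-type set $\{a + (m+1)b : a\in A, b\in B\}$ works and difference representations factor cleanly. For general $g$ the difficulty is guaranteeing that \emph{at least} $g$ representations survive for every target difference when blocks are glued, because differences spanning two blocks may not inherit the full multiplicity $g$ from either factor. I anticipate handling this either by (i) using $g$ independently shifted copies of an optimal $1$-difference basis and showing the cross-differences can be arranged to each contribute, or (ii) adapting Mirsky's outlined argument so that the multiplicity-$g$ condition is preserved under the scaling operation by choosing the dilation factor large enough that intra-block and inter-block differences never collide, thereby keeping the representation counts additive across the decomposition. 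The delicate part is the lower-order accounting: one must verify that the error terms introduced at the boundaries are $o(\sqrt{n})$ uniformly, so that they do not corrupt the constant $L$; this is where the argument must be carried out with care, and it is the step I would expect to consume most of the technical work.
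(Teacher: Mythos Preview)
Your high-level strategy---show $U=\limsup \le \liminf=L$ by bootstrapping a near-optimal $g$-basis for some fixed $v$ into near-optimal bases for all large $n$---matches the paper. But the concrete constructions you propose do not do the job. The concatenation $\bigcup_{i=0}^{t-1}(A+im)$ has the wrong scaling: $t$ translates of an $a$-element set give cardinality $\le ta$ while covering differences only up to order $tm$, so the ratio is $ta/\sqrt{tm}=(a/\sqrt m)\sqrt t$, which diverges in $t$; your own algebra ``$\eta_g(tm)\lesssim Lt\sqrt m = L\sqrt m\cdot\sqrt{tm}/\sqrt m\cdot\sqrt t$'' reflects exactly this blow-up. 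For the product $\{a+(m{+}1)b\}$ you correctly flag the seam problem, but neither fix you suggest works: shifting $g$ copies of an optimal $1$-basis only yields $\eta_g(n)\le g\,\eta_1(n)$ (bounded, not convergent), and ``make the dilation large so differences don't collide'' fails because for $s=qM+r$ you must represent $r$ (and sometimes $r-M$) as a difference from $A$, yet $A$ is only a $g$-basis for $[m]$, so values outside $[m]$---in particular the negative residue---are uncontrolled.

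The ingredient you are missing is Singer's theorem, which is the heart of the R\'edei--R\'enyi/Mirsky argument you cite. For a prime $q$ and $m=q^2+q+1$ there is a $(q{+}1)$-element set whose pairwise differences hit every nonzero residue modulo $m$ \emph{exactly once}. The paper takes a $g$-basis $\mathcal B$ for $[v]$ with $|\mathcal B|=\eta_g(v)$ and forms $\mathcal F=\{a_i+mb_j\}$. Because the Singer factor is a \emph{perfect} difference set mod $m$, any $s\in[mv]$ decomposes cleanly: its residue mod $m$ is matched (uniquely) by a Singer pair $(a_h,a_\ell)$, and the resulting quotient---either $k$ or $k+1$, both in $[v]$---is then matched at least $g$ times by $\mathcal B$. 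Thus the $g$-multiplicity comes entirely from the $\mathcal B$ factor, and one gets $\eta_g(mv)\le (q{+}1)\,\eta_g(v)$; choosing a prime $q\approx\sqrt{n/v}$ (via any standard prime-gap result) gives $\eta_g(n)/\sqrt n\lesssim \eta_g(v)/\sqrt v$, which forces $U\le L$. A generic $1$-difference basis in the first slot would not give this clean factorization; Singer's perfection is essential.
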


It is also interesting to investigate $\eta_g(G)$ and $\alpha_g(G)$ for other groups $G$ than the integers. Bounds in one direction for each function come from counting, but finding constructions seems highly dependent on the specific group. In particular, it is not known if there exists an $\epsilon>0$ such that $\eta_1(G) > \epsilon \sqrt{|G|}$ for any finite abelian group $G$ (see the discussion after Problem 31 in \cite{green100}). In this paper we consider the case that $G = \mathbb{F}_p^n$. In this setting it seems that the parity of $n$ plays a role, and we can only show that the ``even limit" and ``odd limit" each exist.

\begin{theorem}\label{vector-space-theorem}
    Let $p$ an odd prime and $g$ a natural number be fixed. Then the limits
    \[
    L_e = \lim_{k\to \infty} \frac{\eta_g\left( \mathbb{F}_p^{2k}\right)}{\sqrt{p^{2k}}}
    \]
    and
    \[
    L_o = \lim_{k\to \infty} \frac{\eta_g\left( \mathbb{F}_p^{2k+1}\right)}{\sqrt{p^{2k+1}}}
    \]
    both exist and are positive real numbers. 
    
\end{theorem}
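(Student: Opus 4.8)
The plan is to mirror the strategy behind Theorem~\ref{Ntheorem}, replacing the interval/mixed-radix constructions of R\'edei--R\'enyi \cite{Redei-Renyi} and Mirsky \cite{Mirsky} by the direct product structure $\mathbb{F}_p^{m}=\mathbb{F}_p^{a}\times\mathbb{F}_p^{b}$ available whenever $m=a+b$. Throughout set $b_m=\eta_g(\mathbb{F}_p^m)/\sqrt{p^m}$, so that the two claimed limits are $\lim_k b_{2k}$ and $\lim_k b_{2k+1}$.

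First I would record the two soft bounds that force any limit to be a positive real. For the lower bound, a $g$-difference basis $A$ for $\mathbb{F}_p^m$ must use its $|A|^2-|A|$ ordered pairs of distinct elements to cover each of the $p^m-1$ nonzero elements at least $g$ times, so $|A|^2\ge |A|+g(p^m-1)$ and hence $b_m\ge\sqrt{g(1-p^{-m})}$; this already gives $\liminf_m b_m\ge\sqrt{g}>0$. For the upper bound, if $A\subseteq\mathbb{F}_p^a$ and $B\subseteq\mathbb{F}_p^b$ are $g$-difference bases then $r_{(A\times B)-(A\times B)}(x,y)=r_{A-A}(x)\,r_{B-B}(y)\ge g^2\ge g$, so $A\times B$ is a $g$-difference basis for $\mathbb{F}_p^{a+b}$ and
\[
\eta_g(\mathbb{F}_p^{a+b})\le \eta_g(\mathbb{F}_p^{a})\,\eta_g(\mathbb{F}_p^{b}),\qquad\text{equivalently}\qquad b_{a+b}\le b_a\,b_b .
\]
Iterating from a single explicitly constructed basis in a fixed small dimension (for instance a parabola-type Sidon set in $\mathbb{F}_p^2$, patched to cover the missing directions) shows $b_m\le C$ for a constant $C=C(p,g)$. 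Thus $(b_m)$ is pinched between positive constants, and Fekete's lemma applied to the subadditive sequence $\log b_m$ shows $\tfrac1m\log b_m$ converges; combined with the two displayed bounds its limit is $0$, i.e.\ $\eta_g(\mathbb{F}_p^m)=p^{m/2+o(m)}$.

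The heart of the proof is to upgrade this exponential-order statement to genuine convergence of $b_m$ along each residue class of $m$ modulo $2$. Submultiplicativity and boundedness alone do not suffice for this—$\|m\theta\|$ is a bounded subadditive sequence that does not converge—so here I would transplant the finer part of the R\'edei--R\'enyi/Mirsky argument. Fixing the even case, write $c_k=b_{2k}$ (submultiplicative by the display), fix $\epsilon>0$, and choose a scale $2k_0$ at which $c_{k_0}$ is within $\epsilon$ of $\liminf_k c_k$. For a general large even dimension I would then build a near-optimal basis by combining a near-minimizing block in dimension $2k_0$ with an asymptotically negligible correction in the few remaining dimensions, and invoke the counting lower bound to prevent the normalized size from falling below the liminf; carried out with explicit control on the error terms, this should force $\limsup_k c_k\le\liminf_k c_k$. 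The identical scheme run on odd dimensions produces $L_o$.

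I expect this last step to be the main obstacle, for two reasons. First, because each building block contributes a normalized factor bounded below by $\sqrt g>1$, a product of many small blocks overshoots badly, so the construction must use a \emph{single} large near-optimal block together with a genuinely lower-order correction; making ``lower order'' quantitative—so that the correction's multiplicative cost tends to $1$—is exactly the delicate point, and is where the counting bound and the slow-variation estimate $\eta_g(\mathbb{F}_p^{m+1})\le p\,\eta_g(\mathbb{F}_p^{m})$ must be used in tandem. Here it is also important to multiply a $g$-difference basis by a \emph{$1$}-difference basis rather than another $g$-basis, since $r_{(A\times B)-(A\times B)}(x,y)=r_{A-A}(x)\,r_{B-B}(y)\ge g\cdot 1$ preserves the covering multiplicity $g$ (whereas a $g\times g$ product records only the weaker multiplicity $g^2$), so that the relevant $\eta_g$ is not inflated. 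Second, the normalization by $\sqrt{p^m}$ is incompatible across parities: an odd-dimensional space cannot be assembled from even-dimensional blocks without a leftover odd factor carrying the irrational $\sqrt p$, so even and odd scales cannot be interpolated against one another. This is precisely why the argument yields the two separate limits $L_e$ and $L_o$ rather than a single limit.
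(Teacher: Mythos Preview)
Your overall architecture matches the paper's: submultiplicativity from products, a counting lower bound, and then the ``one near-optimal block plus a correction of normalized size $1+o(1)$'' upgrade from $\liminf$ to $\lim$ along each parity class. You have also correctly diagnosed that the whole proof hinges on producing, for each large even $2t$, a \emph{$1$-difference basis} of $\mathbb{F}_p^{2t}$ of size $(1+o(1))p^t$, and that one must multiply a $g$-basis by a $1$-basis rather than by another $g$-basis.

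The gap is that you never supply this $(1+o(1))$-correction, and the tools you propose---the counting lower bound and the slow-variation estimate $\eta_g(\mathbb{F}_p^{m+1})\le p\,\eta_g(\mathbb{F}_p^{m})$---cannot produce it; the latter only gives $b_{m+1}\le\sqrt{p}\,b_m$. Indeed, even your boundedness step fails as written: the parabola in $\mathbb{F}_p^2$, patched, yields only $b_2\le 1+O(p^{-1/2})>1$, and iterating via submultiplicativity gives $b_{2k}\le b_2^{\,k}\to\infty$. The paper's key extra idea is to run the parabola not over $\mathbb{F}_p$ but over the growing \emph{extension field} $\mathbb{F}_{p^t}$: the set $\{(x,x^2):x\in\mathbb{F}_{p^t}\}\cup(\{0\}\times B)$, with $B$ a $1$-difference basis of $\mathbb{F}_{p^t}$, is a $1$-difference basis of $\mathbb{F}_{p^t}\times\mathbb{F}_{p^t}\cong\mathbb{F}_p^{2t}$ of size $p^t+\eta_1(\mathbb{F}_{p^t})$. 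This recursion $\eta_1(\mathbb{F}_{p^{2t}})\le p^t+\eta_1(\mathbb{F}_{p^t})$ first gives the crude bound $\eta_1(\mathbb{F}_{p^{t}})\le C\sqrt{p^{t}}$ by induction on $t$, and then feeds back into itself to yield $\eta_1(\mathbb{F}_p^{2t})\le p^t(1+Cp^{-t/2})=(1+o(1))p^t$. With this lemma in hand, your outlined ``one $g$-block at scale $2k_0+1$ (or $2k_0$) times a $(1+o(1))$-normalized $1$-basis in the remaining even number of dimensions'' finishes exactly as you describe.
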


Contrary to the situation in the integers, we in fact conjecture that these limits are not the same.

\begin{conjecture}\label{parity conjecture}
    For $L_e$ and $L_o$ defined in Theorem \ref{vector-space-theorem}, 
    \[
    L_e \not= L_o.
    \]
\end{conjecture}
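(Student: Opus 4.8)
Since Conjecture \ref{parity conjecture} is stated as an open problem rather than proved, what follows is a plan of attack rather than a proof. The expected inequality is $L_e < L_o$: even-dimensional spaces should admit the more efficient difference bases, because a non-degenerate quadratic form $Q$ on $\mathbb{F}_p^{2k}$ distributes its nonzero values with perfect balance (every nonzero level set $\{v:Q(v)=c\}$ has the same cardinality), whereas on $\mathbb{F}_p^{2k+1}$ the nonzero level sets split into two different sizes according to whether $c$ is a square. The plan is to separate the two limits by (i) producing an even-dimensional construction whose normalized cost is demonstrably small, and (ii) proving a parity-sensitive lower bound in odd dimension that exceeds the even-dimensional upper bound. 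Since the even- and odd-indexed normalized ratios converge by Theorem \ref{vector-space-theorem}, it is enough to exhibit a constant $c_e$ with $\eta_g(\mathbb{F}_p^{2k}) \le (c_e+o(1))\sqrt{p^{2k}}$ along some sequence of even dimensions, together with a constant $c_o > c_e$ such that $\eta_g(\mathbb{F}_p^{2k+1}) \ge (c_o-o(1))\sqrt{p^{2k+1}}$ in every large odd dimension; these yield $L_e \le c_e < c_o \le L_o$.

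For the upper bound on $L_e$ I would build $A \subseteq \mathbb{F}_p^{2k}$ from the geometry of a hyperbolic form, taking $A$ to be a carefully chosen union of level sets of $Q$ (equivalently, exploiting the splitting $\mathbb{F}_p^{2k} = \mathbb{F}_{p^k}\oplus\mathbb{F}_{p^k}$ via the graph of a suitable biadditive or planar map together with its fibers). The point is that the uniform size of the nonzero spheres lets one cover every nonzero difference an almost equal number of times, so the representation function $r_{A-A}$ is nearly flat and $|A|$ approaches the Parseval lower bound $\sqrt{g\,p^{2k}}$. Quantifying this through the additive Fourier transform, one estimates $\max_{\chi\neq\chi_0}|\widehat{1_A}(\chi)|$ using the Gauss sums attached to $Q$ and converts a small spectral spread into a small excess over $\sqrt{g}$, giving $L_e \le c_e$ for an explicit $c_e$.

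The main obstacle is the matching lower bound in odd dimension, and it is genuinely hard because the elementary counting bound $|A| \ge \sqrt{g(p^n-1)+\tfrac14}+\tfrac12$ is \emph{parity-blind}: it produces the same constant $\sqrt{g}$ for both parities and so can never separate $L_e$ from $L_o$. A successful argument must instead feed in a second-order, parity-sensitive invariant. The natural candidate is the fourth Fourier moment $\sum_{x}r_{A-A}(x)^2 = p^{-n}\sum_{\chi}|\widehat{1_A}(\chi)|^4$, which is bounded below by the constraint $r_{A-A}(x)\ge g$; combined with the fact that in odd dimension the relevant Gauss sums carry an irrational factor $\sqrt{p}$, so that the nonzero Fourier coefficients of a structured set cannot be flattened as completely as in even dimension, one hopes to force $\sum_{x\neq 0}r_{A-A}(x)^2$ strictly above its even-dimensional value and hence $|A| \ge c_o\sqrt{p^n}$ with $c_o > c_e$.

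The delicate point is that Conjecture \ref{parity conjecture} concerns \emph{asymptotically optimal} bases rather than exact difference sets, so one cannot simply invoke Bruck--Ryser--Chowla-type non-existence theorems: those forbid exact perfect difference sets but say nothing about sets that are merely within $o(\sqrt{p^n})$ of optimal. What is needed is a \emph{stable} version of the odd-dimensional obstruction, one that degrades continuously so that near-optimal sets still inherit the parity defect. Establishing such a robust spectral lower bound in odd dimension is, I expect, the crux of the whole problem.
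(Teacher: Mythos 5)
Conjecture \ref{parity conjecture} is stated in the paper as an open problem --- the paper proves only that the two limits exist (Theorem \ref{vector-space-theorem}) --- so there is no proof of the paper's to compare yours against, and you were right to present a plan of attack rather than claim a proof. Note that the first half of your plan is essentially already in the paper: the parabola construction of Lemma \ref{lem recursive} together with Corollary \ref{square corollary} gives $\eta_1(\mathbb{F}_p^{2k}) = (1+o(1))p^k$, which matches the parity-blind counting lower bound exactly, so for $g=1$ one has $L_e = 1$; in particular your predicted direction $L_e < L_o$ is the only possible form of the conjecture in that case, and no Gauss-sum or spectral refinement is needed on the even side. (For general $g$ the even side is less settled --- the union-of-translates trick only gives $L_e \leq g$ rather than $\sqrt{g}$ --- so even your step (i) would require new work beyond $g=1$.)

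The genuine gap is in your odd-dimensional lower bound, and it is more serious than your closing caveat suggests. The parity-sensitive mechanisms you invoke --- the irrational factor $\sqrt{p}$ in odd-dimensional Gauss sums, the uneven splitting of the spheres $\{v : Q(v)=c\}$ --- are properties of quadratic-form-based \emph{constructions}, whereas a lower bound on $\eta_g$ must constrain \emph{every} set $A$ with $r_{A-A}\geq g$, structured or not; your sentence that ``the nonzero Fourier coefficients of a structured set cannot be flattened'' has no force against an arbitrary near-optimal $A$. Moreover, the fourth-moment quantity you propose is itself parity-blind as it stands: from $r_{A-A}(x)\geq g$ for $x\neq 0$ and $\sum_x r_{A-A}(x) = |A|^2$, Cauchy--Schwarz yields $\sum_x r_{A-A}(x)^2 \geq |A|^4/p^n$, which reproduces only $|A| \geq (1+o(1))\sqrt{gp^n}$ --- the same constant in both parities --- so no strict separation can come out of these moment inequalities alone. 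To separate the limits you would need an inequality whose near-extremal configurations exist in even dimension but provably cannot exist, even approximately, in odd dimension; that is precisely the ``stable'' obstruction you name in your last paragraph, and since no candidate for it is identified, the plan as written cannot close. Your diagnosis of where the difficulty lies (the parity-blindness of counting, the inapplicability of Bruck--Ryser--Chowla-type exact nonexistence results to sets that are merely within $o(\sqrt{p^n})$ of optimal) is accurate and consistent with the statement remaining open in the paper.
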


Next we turn our attention to bounding the number of representations from above. From Corollary 1.4 in \cite{Xu} (and the paragraph above it) one can easily deduce that
 that $\alpha_g(n) = \Theta\left( \sqrt{gn}\right)$ and that
\[
\lim\limits_{g\rightarrow\infty} \liminf_{n\to \infty} \frac{\alpha_g(n)}{\sqrt{gn}}
=\lim\limits_{g\rightarrow\infty} \limsup_{n\to \infty} \frac{\alpha_g(n)}{\sqrt{gn}}.
\]
The results from \cite{Xu} do not determine whether or not
\[ \liminf_{n\to \infty} \frac{\alpha_g(n)}{\sqrt{gn}}= \limsup_{n\to \infty} \frac{\alpha_g(n)}{\sqrt{gn}}.
\]
We therefore strengthen the result in \cite{Xu} by proving the following theorem.
\begin{theorem} 
\label{alpha-theorem}
Let $g$ be a positive integer. Then $\alpha_{g}(n)= (1+o_{g}(1))\sqrt{gn}$.
\end{theorem}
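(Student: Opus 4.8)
The plan is to prove the two matching bounds $\alpha_g(n)\le (1+o_g(1))\sqrt{gn}$ and $\alpha_g(n)\ge (1-o_g(1))\sqrt{gn}$ separately. The upper bound is the more routine half, but it does require more than naive counting: if $A\subseteq[n]$ is admissible and $m=|A|$, then $\binom{m}{2}=\sum_{x>0}r_{A-A}(x)\le g(n-1)$ only yields $m\le(1+o(1))\sqrt{2gn}$, so the spurious factor $\sqrt2$ must be removed. I would remove it with an Erd\H{o}s--Tur\'an-type second moment (sliding window) argument, exactly as in the classical $g=1$ (Sidon) case. The lower bound requires explicit constructions and is where I expect the real work to lie.

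For the upper bound, fix a window length $u$ with $\sqrt{n/g}\ll u\ll n$ (e.g.\ $u=\lfloor n^{3/4}\rfloor$), and for $t\in\mathbb{Z}$ set $f(t)=|A\cap\{t+1,\dots,t+u\}|$. Each element of $A$ lies in exactly $u$ windows, so $\sum_t f(t)=um$, where the sum runs over the $M=n+u-1$ windows meeting $[n]$. Counting pairs inside windows,
\[
\sum_t \binom{f(t)}{2}=\sum_{d=1}^{u-1}(u-d)\,r_{A-A}(d)\le g\sum_{d=1}^{u-1}(u-d)=g\binom{u}{2},
\]
where the inequality is the hypothesis $r_{A-A}(d)\le g$. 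Convexity of $x\mapsto\binom{x}{2}$ gives $\sum_t\binom{f(t)}{2}\ge M\binom{um/M}{2}$, and combining the two estimates yields $m^2\le \tfrac{M}{u}\bigl(g(u-1)+m\bigr)$. With the stated choice of $u$ the right side is $gn(1+o(1))$, so $m\le(1+o_g(1))\sqrt{gn}$.

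For the lower bound, the case $g=1$ is classical: a Singer perfect difference set $D\subseteq\mathbb{Z}_N$ with $N=q^2+q+1$ is a Sidon set of size $q+1\sim\sqrt N$, and the reduction $\mathbb{Z}_N\to\{0,\dots,N-1\}\subseteq\mathbb{Z}$ can only decrease difference multiplicities, so $\alpha_1(N)\ge(1-o(1))\sqrt N$; the density of admissible $N$ together with the monotonicity of $\alpha_1$ then covers all $n$. For general $g$ I would work in the cyclic model and take a union $A=\bigcup_{i=1}^{t}\lambda_iD$ of $t\approx\sqrt g$ dilates of $D$ by distinct nonzero scalars $\lambda_i\in\mathbb{Z}_N^\times$. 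Each dilate is again a perfect difference set, so the diagonal terms contribute exactly $t$ to $r_{A-A}(x)$ for every $x\neq0$, while the off-diagonal cross-correlations $r_{\lambda_iD-\lambda_jD}(x)$ have average $|D|^2/N\approx1$ over $x$; hence the expected total multiplicity is $\approx t^2=g$, and $|A|=t|D|\sim\sqrt g\,\sqrt N=\sqrt{gN}$. Transferring to $[n]$ and interpolating in $n$ as before would give $\alpha_g(n)\ge(1-o_g(1))\sqrt{gn}$, provided one can certify that every nonzero difference is hit at most $g$ times.

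That last proviso is the main obstacle. Controlling the \emph{average} multiplicity is immediate, but the definition of $\alpha_g$ demands a bound on the \emph{maximum}, i.e.\ on $\max_x\sum_{i,j}r_{\lambda_iD-\lambda_jD}(x)$, and this is the crux. For $g\to\infty$ I expect it to follow from equidistribution of the cross-correlations via Weil/Kloosterman-type character-sum bounds: the fluctuation of each $r_{\lambda_iD-\lambda_jD}(x)$ about its mean is $O(\sqrt q)$, which is negligible against the main term once the number $\binom{t}{2}$ of cross terms is large, and this is essentially the mechanism behind the $\lim_g$ statement that can be read off from \cite{Xu}. The genuinely delicate regime is fixed (and especially small) $g$, where there are only $O(g)$ cross terms and no averaging is available, so one cannot a priori exclude a single $x$ at which many cross-correlations spike simultaneously---the same phenomenon by which two conics may meet in two points rather than one---threatening a loss of up to a factor $\sqrt2$. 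Resolving this for every fixed $g$ is the heart of the matter, and I would attack it either by choosing a more rigid family of dilates (or curves) with provably bounded worst-case cross-correlation, or by showing that any heavy differences are so rare that deleting one endpoint from each costs only $o(\sqrt{gn})$ elements.
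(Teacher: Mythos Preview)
Your upper bound is correct and complete. The sliding-window second-moment computation is the Erd\H{o}s--Tur\'an approach; the paper instead uses Lindstr\"om's telescoping argument, bounding $\sigma_\ell=\sum_{t\le\ell}\sum_i(a_i-a_{i-t})$ above by $\binom{\ell+1}{2}n$ via cancellation and below by roughly $s^2/(2g)$ from the fact that each positive value occurs at most $g$ times among the $s=k\ell-\binom{\ell+1}{2}$ differences, then optimising in $\ell$. The two methods are classical and interchangeable here, and both yield the constant $\sqrt{g}$.

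Your lower bound, however, has the gap you yourself flag: for a union of $t\approx\sqrt g$ dilates of a Singer set you only know that the cross-correlations $r_{\lambda_iD-\lambda_jD}(x)$ are close to $1$ on average, and for fixed $g$ no character-sum bound gives the required pointwise control. Neither of your two proposed attacks (rigid families with bounded cross-correlation, or deleting heavy-difference endpoints) is actually carried out, so as written the argument does not establish $\alpha_g(n)\ge(1-o_g(1))\sqrt{gn}$.

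The paper sidesteps this difficulty with a different construction: instead of taking a union of copies and staying in the same group, it takes a \textit{quotient}. Let $q\equiv1\pmod g$ be a prime power, let $B_q\subseteq\mathbb{Z}/(q^2-1)\mathbb{Z}$ be the Bose--Chowla Sidon set of size $q$, and let $H$ be the subgroup of order $g$. A short lemma gives $(B_q-B_q)\cap H=\{0\}$, so the projection $B_q\to A_H\subseteq\mathbb{Z}/N\mathbb{Z}$ with $N=(q^2-1)/g$ is injective and $|A_H|=q$. For any nonzero $d+H$, a representation $(a+H)-(b+H)=d+H$ with $a,b\in B_q$ forces $a-b=d+h$ for some $h\in H$; since $B_q$ is Sidon there is at most one ordered pair for each of the $g$ choices of $h$, whence $r_{A_H-A_H}(d+H)\le g$ exactly, with no error term to manage. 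Thus $|A_H|=q=\sqrt{gN+1}$, and choosing $q\equiv1\pmod g$ with $N\le n$ and $N=(1-o(1))n$ via Siegel--Walfisz gives the lower bound.
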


In this theorem, the $o_g(1)$ term goes to $0$ as $n$ goes to infinity and the subscript  $g$  indicates that the rate of convergence  may depend on  $g$.  
 In other words, there is a function $\delta(g,n)$ such that:
\begin{itemize}
\item  $\frac{\alpha_g(n)}{\sqrt{gn}}= 1 +\delta(g,n)$, and 
\item for every $g$, $\lim\limits_{n\rightarrow\infty}\delta(g,n)=0.$
\end{itemize}

%In the next section we give some preliminary results that will be used in the rest of the paper. 
In Sections \ref{kravitz section}, \ref{vector space section}, and \ref{differences section}, we prove Theorems 
\ref{Ntheorem}, \ref{vector-space-theorem}, and \ref{alpha-theorem} respectively. In Section \ref{conclusion section} we discuss similar problems but for sums instead of differences and then we give some applications of these problems in coding theory and cryptography.

\section{Difference Bases of Integers}\label{kravitz section}
%\begin{comment}\subsection{Problem and result statements}
%\subsection{Problem and Result Statements}
%Suppose $n$ and $g$ are positive integers.
%We say that \lq\lq $A$ is a $g$-difference basis for $[n]$\rq\rq 
%if $A$ is a set of integers and, for each $x\in [n]$,  there
%are at least $g$ solutions $(a_{1},a_{2})\in A\times A$ to the 
%the equation $a_{1}-a_{2}=x$.
%For positive integers $n$ and $g$, define
%%\[\eta_{g}(n) {\buildrel {\tt def} \over =}\min \lbrace |A| : A \text{ is a $g$-difference basis for  } [n]\rbrace .\]
%In the special case $g=1$ it will be more convenient to write $n^{*}$ instead of $\eta_{1}(n)$.
\vskip.5cm
%Already in intro
%In  \cite{Kravitz-21-ActaArith},
%Kravitz asserts that the main theorem in his paper can be strengthened, 
%provided one can verify that  the  limit $\lim\limits_{n\rightarrow \infty}\frac{\eta_{g}(n)}{\sqrt{n}}$
%\begin{equation}
%\label{conjecture}
%\lim\limits_{n\rightarrow \infty}\frac{\eta_{g}(n)}{\sqrt{n}}
%\end{equation}
%exists and is a finite non-zero real  number.
%For the special case $g=1$, Redei and Renyi \cite{Redei-Renyi} proved 
%that  the  limit does exist.
%Leonid Mirsky's helpful MathSciNet review \cite{Mirsky} outlines the proof.
%The proof can be adapted so that it works for any $g$.  
%Hence the goal in this section is to verify Kravitz's conjecture.

%\end{comment}

\subsection{Crude Bounds}

The first step is to  verify the \lq\lq trivial\rq\rq \,lower bound for $\eta_{g}(n)$  
that is mentioned  on page 200 (second page of the paper) of \cite{Kravitz-21-ActaArith}. 
We actually use this bound, so the  proof is written  out  explicitly in Lemma \ref{lower} below. 
More or  less the same argument  is outlined  on page 51 of \cite{{Bernshteyn-Tait-19}} for $g=1$.
Brauer made a similar argument  for \lq\lq restricted\rq\rq \,difference bases in
\cite{Brauer-45}. 

\begin{lemma} For all $n>1$, $\frac{\eta_{g}(n)}{\sqrt{n}} \geq  \sqrt{2g}.$
\label{lower}
\end{lemma}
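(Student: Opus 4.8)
The plan is to prove the contrapositive counting statement directly: if $A \subseteq \mathbb{Z}$ is any $g$-difference basis for $[n]$ with $|A| = m$, then $m(m-1) \geq 2gn$, from which the claimed bound follows. Taking $A$ to be an extremal set (so $m = \eta_g(n)$) then yields $\eta_g(n)^2 > 2gn$, i.e. $\eta_g(n)/\sqrt{n} > \sqrt{2g} \geq \sqrt{2g}$.

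First I would set up the double count. Fix a $g$-difference basis $A$ with $|A| = m$. Since every target value $x \in \{1, 2, \dots, n\}$ is strictly positive, any representation $a_1 - a_2 = x$ forces $a_1 > a_2$; hence each such representation corresponds to an ordered pair drawn from the $\binom{m}{2}$ pairs $(a_1, a_2) \in A \times A$ with $a_1 > a_2$. This gives the upper bound
\[
\sum_{x=1}^{n} r_{A-A}(x) \;\leq\; \sum_{x>0} r_{A-A}(x) \;=\; \binom{m}{2} \;=\; \frac{m(m-1)}{2},
\]
because the pairs with a positive difference are counted at most once across all positive values of $x$, and there are exactly $\binom{m}{2}$ of them.

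Next I would invoke the defining property of a $g$-difference basis, namely $r_{A-A}(x) \geq g$ for each of the $n$ values $x \in \{1, \dots, n\}$, which yields the matching lower bound $\sum_{x=1}^{n} r_{A-A}(x) \geq gn$. Combining the two inequalities gives $gn \leq \tfrac{1}{2} m(m-1)$, so $m^2 > m(m-1) \geq 2gn$ and therefore $m > \sqrt{2gn}$. Specializing to $m = \eta_g(n)$ delivers $\eta_g(n)/\sqrt{n} > \sqrt{2g}$, which is even slightly stronger than the stated inequality.

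This argument is elementary and I expect no genuine obstacle; the only point that requires care is the bookkeeping of ordered versus unordered pairs. The essential observation making the constant $\sqrt{2g}$ (rather than $\sqrt{g}$) correct is precisely that a positive difference $x$ can only arise from pairs $(a_1,a_2)$ with $a_1 > a_2$, so the relevant pool has size $\binom{m}{2}$ and not $m^2$; conflating these would lose the factor of $2$. I would state the inequality chain as a single display to keep this distinction transparent.
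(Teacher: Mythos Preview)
Your argument is correct and is essentially the same counting as the paper's: both show that the $\binom{m}{2}$ unordered pairs from an extremal $A$ must cover each of the $n$ positive targets at least $g$ times, giving $m(m-1)\ge 2gn$ and hence $\eta_g(n)^2>2gn$. The paper merely packages this as a preimage bound for the map $\{a_1,a_2\}\mapsto |a_1-a_2|$, while you phrase it via $\sum_{x=1}^n r_{A-A}(x)$; the content is identical.
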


\begin{proof}
Suppose that  $D$ is a finite set, that $f:D\rightarrow Y$,  and  that $S$ is a finite subset of $Y$. 
The preimages $f^{-1}(y)$, $y\in S$ are disjoint, therefore $|D|\geq \sum\limits_{y\in S} |f^{-1}(y)|$.  
If  $|f^{-1}(y)|\geq g$ for all $y\in S$, then 
   \begin{equation}
   \label{generalv}|D|\geq g|S|.
   \end{equation}
Now consider the special case where:
 
\begin{itemize}
   \item   $Y={\mathbb Z}_{+}$, and $S=[n]$, and
   \item $A_{n}$ is a $g$-difference basis for $[n]$  with  $|A_{n}|=\eta_{g}(n)$, and
   \item   $D={A_{n} \choose 2}$ is the set of all ${|A_{n}|\choose 2}$ two-element subsets of $A_{n}$, and 
   \item the function $f$ is defined by $f(\lbrace a_{1},a_{2}\rbrace)=|a_{1}-a_{2}|$. 
 \end{itemize}

Observe that
 \[
  \eta_{g}(n)^{2}=|A_{n}|^{2}> |A_{n}|\left(|A_{n}|-1\right)=2|D|.
  \]
 Applying the inequality (\ref{generalv}), we get  $ \eta_{g}(n)^{2}\geq 2gn.$ 
 Taking square roots we get  Lemma \ref{lower}.
 \end{proof}
 
 The next lemma constructs a $1$-difference basis that is essentially the same as that in
\cite{Mirsky,Redei-Renyi}.  We needed to make the $n$-dependence explicit.
 \vskip.5cm
 \begin{lemma}  
    \label{defBn} Let $k_{n}=\lceil \sqrt{n}\ \rceil$. The $2k_{n}$-element set
    \[ 
     B_{n}:=\lbrace 1,2,3,\dots k_{n}-1\rbrace \cup \lbrace k_{n},2k_{n}, 3k_{n}, \dots ,k_{n}^{2},(k_{n}+1)k_{n}\rbrace
    \]
     is a 1-difference basis for $[n]$.
    \end{lemma}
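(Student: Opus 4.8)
The plan is to verify the defining property directly by the division algorithm. Write $k := k_n = \lceil\sqrt n\,\rceil$, so that $k^2 \ge n$, and split $B_n$ into its \emph{small} part $L = \{1,2,\dots,k-1\}$ and its \emph{large} part $H = \{jk : 1 \le j \le k+1\}$; note that $|L| = k-1$, $|H| = k+1$, and $L \cap H = \emptyset$, which accounts for the claimed size $2k_n$. Fix an arbitrary target $x \in [n]$ and use the division algorithm to write $x = mk + r$ with $0 \le r \le k-1$. Since $1 \le x \le n \le k^2$, the quotient satisfies $0 \le m \le k$, and in fact $m \le k-1$ whenever $r \ge 1$ (because then $mk \le x - 1 < k^2$).

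The idea is to realize $x$ as a single difference of the form $(m+1)k$ minus a small correction. If $r = 0$, then $x = mk$ with $1 \le m \le k$, and I would write $x = (m+1)k - k$; here both $(m+1)k$ and $k$ lie in $H$, since $1 \le m+1 \le k+1$. If $1 \le r \le k-1$, then $m \le k-1$ and I would write $x = (m+1)k - (k-r)$; here $(m+1)k \in H$ because $m+1 \le k$, while $k-r \in \{1,\dots,k-1\} = L$. In either case $x = a_1 - a_2$ with $a_1, a_2 \in B_n$, which is exactly the assertion that $B_n$ is a $1$-difference basis for $[n]$.

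The only delicate point is the bookkeeping at the boundary of the index ranges, and this is precisely where the extra element $(k+1)k$ earns its place: the value $x = k^2$ forces $m = k$ and $r = 0$, so the representation $(m+1)k - k = (k+1)k - k$ requires $(k+1)k \in H$, and one checks that $k^2$ admits no representation as a difference of elements of $B_n$ without it. No other value of $x$ needs the $(k+1)$-st multiple, so the construction is tight. I expect the whole argument to reduce to this short two-case verification, once the range bounds $0 \le m \le k$, together with $m \le k-1$ when $r \ge 1$, are recorded carefully.
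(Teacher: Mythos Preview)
Your proof is correct and follows essentially the same approach as the paper: both use the division algorithm and the identity $x=(m+1)k-(k-r)$, with the paper splitting into the three cases $x<k_n$, $x=k_n$, $x>k_n$ while you split more economically on whether $r=0$ or $r\ge 1$. As a bonus you verify the cardinality claim $|B_n|=2k_n$ via $L\cap H=\emptyset$, which the paper asserts without comment.
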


\begin{proof} 

Suppose $x\in [n]$. We need to write $x$ as the difference of two elements of $B_{n}$.

\begin{description}

\item{\underline{Case 1: $x<k_{n}$.}}
 Observe that $x=k_{n}-(k_{n}-x)$. In case 1, both $k_{n}$ and  $(k_{n}-x)$ are elements of $B_{n}$.
 
 \item{\underline{Case 2: $x=k_{n}$.}} 
 Observe that $k_{n}=2k_{n}-k_{n}$.  Both $k_{n}$ and $2k_{n}$ are elements of $B_{n}$.
 
\item{\underline{Case 3: $x>k_{n}$.}} 
Divide $x$ by $k_{n}$  using the division algorithm. If $r$ and $q$ are the remainder and quotient respectively, then
$x=k_{n}q+r$, where $0\leq r <k_{n}$  and  $q=\lfloor \frac{x}{k_{n}}\rfloor$.  Therefore $x=k_{n}(q+1)-(k_{n}-r)$.
In case 3, $(k_{n}-r)$ is an element of $B_{n}$.  To verify that $k_{n}(q+1)$ is also in $B_{n}$, we need only check that $q+1\leq k_{n}+1$.
  Since the floor function is non-decreasing, we have
\[ 
q=\left\lfloor \frac{x}{k_{n}}\right\rfloor\leq  \left\lfloor \frac{n}{k_{n}}\right\rfloor \leq \frac{n}{k_{n}}\leq  \frac{n}{\sqrt{n}}\leq  \lceil \sqrt{n}\rceil=k_{n}.
\]

 \end{description}
 
 \end{proof}
 
Now notice that, translation does not change the distances between the elements of a set of integers:  
$(a_{1}+t)-(a_{2}+t)=(a_{1}-a_{2})$ for any integer $t$.
Therefore, by taking the union of $g$ translates of  a $1$-difference basis, we get a $g$-difference basis.
The following observation makes a specific choice.

\begin{Obs}
\label{shift}
  Suppose that  $S_{n}$ is a 1-difference basis for $[n]$. If we define
 ${\cal T}_{n,g}=\lbrace t+ b: 0\leq t< g\  \text{\rm and } b\in S_{n}\rbrace ,$ then ${\cal T}_{n,g}$ is a $g$-difference basis for $[n].$
\end{Obs}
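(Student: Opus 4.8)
The plan is to exploit the translation-invariance of differences highlighted just before the statement. First I would rewrite ${\cal T}_{n,g}$ as a union of $g$ translates of $S_{n}$, namely ${\cal T}_{n,g} = \bigcup_{t=0}^{g-1} (S_{n} + t)$, where $S_{n} + t = \lbrace b + t : b \in S_{n} \rbrace$. Each translate $S_{n} + t$ is contained in ${\cal T}_{n,g}$, and shifting a set of integers by a fixed integer leaves all of its pairwise differences unchanged.

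Next I would fix an arbitrary $x \in [n]$ and exhibit $g$ distinct ordered pairs in ${\cal T}_{n,g} \times {\cal T}_{n,g}$ whose difference is $x$. Since $S_{n}$ is a $1$-difference basis for $[n]$, there exist $b_{1}, b_{2} \in S_{n}$ with $b_{1} - b_{2} = x$. For each integer $t$ with $0 \leq t < g$, the shifted pair $(b_{1} + t, b_{2} + t)$ lies in $(S_{n} + t) \times (S_{n} + t) \subseteq {\cal T}_{n,g} \times {\cal T}_{n,g}$ and satisfies $(b_{1} + t) - (b_{2} + t) = b_{1} - b_{2} = x$.

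It then remains to check that these $g$ pairs are genuinely distinct, which is the only point requiring any care. Since the representation function $r_{A-A}$ counts ordered pairs, two of the pairs $(b_{1} + t, b_{2} + t)$ coincide only if their first coordinates agree, i.e.\ only if $b_{1} + t = b_{1} + t'$, which forces $t = t'$. Hence the $g$ pairs indexed by $t \in \lbrace 0, 1, \dots, g-1 \rbrace$ are pairwise distinct, so $r_{{\cal T}_{n,g}-{\cal T}_{n,g}}(x) \geq g$. As $x \in [n]$ was arbitrary, ${\cal T}_{n,g}$ is a $g$-difference basis for $[n]$.

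I expect no real obstacle here: the argument is essentially a one-line consequence of translation-invariance, and the only thing worth stating explicitly is that the $g$ shifts produce distinct \emph{ordered} pairs rather than merely distinct difference representations. Note also that the translates $S_{n} + t$ may overlap, so $|{\cal T}_{n,g}|$ can be strictly smaller than $g\,|S_{n}|$; this causes no difficulty, since being a $g$-difference basis is a lower-bound condition on the number of representations and is unaffected by such coincidences.
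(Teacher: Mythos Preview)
Your proposal is correct and follows exactly the approach the paper indicates: the paper states the observation without formal proof, noting just before it that translation preserves differences and hence the union of $g$ translates of a $1$-difference basis is a $g$-difference basis. Your added verification that the $g$ ordered pairs are distinct is the right detail to make explicit, and your remark about possible overlap of the translates is also apt (the paper implicitly acknowledges this later by using only the bound $|{\cal T}_{n,g}|\leq g|S_n|$).
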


\begin{corollary}$ \frac{\eta_{g}(n)}{\sqrt{n}} \leq 2(1+\frac{1}{\sqrt{n}})g.$
\label{upper}
\end{corollary}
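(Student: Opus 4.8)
The plan is to chain together the construction of Lemma \ref{defBn} with the shifting device of Observation \ref{shift}, since those two results are precisely the ingredients that make this a corollary rather than an independent argument. First I would invoke Lemma \ref{defBn} to obtain the explicit $1$-difference basis $B_n$ for $[n]$, which has $|B_n| = 2k_n$ elements where $k_n = \lceil \sqrt{n}\,\rceil$. Then I would take $S_n = B_n$ in Observation \ref{shift}, producing the set $\mathcal{T}_{n,g} = \{t + b : 0 \leq t < g,\ b \in B_n\}$, which that observation guarantees is a $g$-difference basis for $[n]$. Since $\eta_g(n)$ is by definition the minimum cardinality of such a basis, I immediately get $\eta_g(n) \leq |\mathcal{T}_{n,g}|$.

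The next step is purely a cardinality count. The set $\mathcal{T}_{n,g}$ is a union of $g$ translates of $B_n$, so $|\mathcal{T}_{n,g}| \leq g\,|B_n| = 2gk_n$; here I only need the upper bound, so any overlaps among the translates can be discarded without harm. To turn the factor $k_n$ into something of the stated shape, I would use the elementary ceiling estimate $k_n = \lceil \sqrt{n}\,\rceil \leq \sqrt{n} + 1 = \sqrt{n}\bigl(1 + \tfrac{1}{\sqrt{n}}\bigr)$. Combining these gives $\eta_g(n) \leq 2gk_n \leq 2g\sqrt{n}\bigl(1 + \tfrac{1}{\sqrt{n}}\bigr)$.

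Finally I would divide through by $\sqrt{n}$ to arrive at
\[
\frac{\eta_g(n)}{\sqrt{n}} \leq 2g\left(1 + \frac{1}{\sqrt{n}}\right),
\]
which is exactly the claimed bound. There is no genuine obstacle here: the entire content lives in Lemma \ref{defBn} and Observation \ref{shift}, and what remains is the size bookkeeping together with the crude ceiling inequality. The only points requiring minimal care are remembering that the minimality of $\eta_g(n)$ supplies the inequality $\eta_g(n) \leq |\mathcal{T}_{n,g}|$, and that bounding a union of translates from above needs no inclusion–exclusion since overlaps only shrink the cardinality.
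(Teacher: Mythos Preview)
Your proposal is correct and essentially identical to the paper's own proof: both apply Observation \ref{shift} with $S_n=B_n$ from Lemma \ref{defBn}, use the union bound $|\mathcal{T}_{n,g}|\le g|B_n|=2gk_n$, and finish with the ceiling estimate $k_n\le\sqrt{n}+1$. The only cosmetic difference is that the paper phrases the union bound as ``Boole's inequality'' and records a strict inequality $k_n<\sqrt{n}+1$, but the argument is the same.
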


\begin{proof} 
Recall the 1-difference basis $B_{n}$ in Lemma \ref{defBn}. Apply Observation 1 with $S_{n}=B_{n}$.
By Boole's inequality, and the definition of $\eta_{g}(n),$ we have $\eta_{g}(n)\leq |{\cal T}_{n,g}|\leq |B_{n}|g=2k_{n}g$.
Since $k_{n}=\lceil \sqrt{n}\ \rceil < \sqrt{n}+1$, it follows that  $\eta_{g}(n)< 2(\sqrt{n}+1)g.$
 \end{proof}
 
 \begin{corollary} 
 \label{1-to-g}
 For all  $n>1$ and all $g\geq 1$, we have
  \[ 
  \eta_{1}(n)\leq \eta_{g}(n)\leq g\eta_{1}(n).
  \]
 \end{corollary}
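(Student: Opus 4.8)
The plan is to prove the two inequalities separately, each following almost immediately from the definitions together with Observation \ref{shift}; this is precisely why the statement can be packaged as a corollary rather than a theorem.

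For the lower bound $\eta_{1}(n)\leq \eta_{g}(n)$, I would exploit the fact that the defining condition of a $g$-difference basis is monotone in $g$. Since $g\geq 1$, any set $A$ satisfying $r_{A-A}(x)\geq g$ for all $x\in[n]$ automatically satisfies $r_{A-A}(x)\geq 1$ for all $x\in[n]$. Hence every $g$-difference basis for $[n]$ is in particular a $1$-difference basis for $[n]$, so the family of $g$-difference bases is a subfamily of the $1$-difference bases. Taking the minimum of $|A|$ over the larger family can only produce a smaller (or equal) optimum, which yields $\eta_{1}(n)\leq \eta_{g}(n)$.

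For the upper bound $\eta_{g}(n)\leq g\,\eta_{1}(n)$, I would begin from a minimum $1$-difference basis $S_{n}$, so that $|S_{n}|=\eta_{1}(n)$, and then apply Observation \ref{shift} with this choice of $S_{n}$. That observation outputs the $g$-difference basis ${\cal T}_{n,g}=\{t+b:0\leq t<g,\ b\in S_{n}\}$. As ${\cal T}_{n,g}$ is a union of $g$ translates of $S_{n}$, a union bound on cardinalities gives $|{\cal T}_{n,g}|\leq g|S_{n}|=g\,\eta_{1}(n)$. Since $\eta_{g}(n)$ is by definition the minimum cardinality over all $g$-difference bases, it is bounded by the cardinality of this particular one, so $\eta_{g}(n)\leq |{\cal T}_{n,g}|\leq g\,\eta_{1}(n)$.

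There is no substantive obstacle to overcome here: both directions are soft consequences of monotonicity in $g$ and of the translation trick already encapsulated in Observation \ref{shift}. The only points deserving a word of care are that the lower bound genuinely uses $g\geq 1$ (so that ``at least $g$'' entails ``at least $1$''), and that the upper-bound estimate is an inequality rather than an equality because the $g$ translates of $S_{n}$ may overlap, so the union could be strictly smaller than $g\,\eta_{1}(n)$.
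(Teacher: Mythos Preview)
Your proposal is correct and matches the paper's implicit argument: the paper states Corollary~\ref{1-to-g} without proof precisely because both inequalities follow immediately from monotonicity in $g$ and from Observation~\ref{shift} applied with $S_{n}$ a minimum $1$-difference basis, exactly as you outline.
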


  \subsection{Proof Outline}
  
From Lemma \ref{lower} and \ Corollary \ref{upper}, we know that 
 $ \frac{\eta_{g}(n)}{\sqrt{n}}$ is bounded above and below by positive real numbers. 
 Therefore the   limits $L=\liminf\limits_{n\rightarrow\infty}  \frac{\eta_{g}(n)}{\sqrt{n}}$ 
  and $U=\limsup\limits_{n\rightarrow\infty}  \frac{\eta_{g}(n)}{\sqrt{n}}$ 
  are positive real numbers:
  
 \begin{equation}
 \label{LUbounds}
 \sqrt{2g}\leq L\leq U\leq 4g.
 \end{equation}
 
 The goal is to prove that $L=U$. 
 Assume that $L<U$, and look for a contradiction.

\vskip.5cm
Say  that a number is {\it undersized }if it is less than $L+ \frac{U-L}{3},$ 
and say  a number is {\it oversized} if it is greater than $U-\frac{(U-L)}{3}.$
No  number can be both undersized  and oversized. 
From the definition of $\limsup$, we know that $\frac{\eta_{g}(n)}{\sqrt{n}}$ is oversized for infinitely many $n$.
On the other hand, using ideas from R\'edei-R\'enyi \cite{Redei-Renyi}, we can prove that 
$\frac{\eta_{g}(n)}{\sqrt{n}}$ is undersized  for all sufficiently large $n$.
This means that infinitely many numbers are both undersized and oversized, which is the sought after contradiction.

\subsection{Adapting R\'edei-R\'enyi}% (MR0030554) }

For any $\delta >0$, we can (by the definition of $L$) choose a positive integer $v$ such that 

\begin{equation}
\label{v*}
 \frac{\eta_{g}(v)}{\sqrt{v}}< L+\frac{\delta}{10g}.
 \end{equation}
In particular,  we'll use $\delta =\frac{U-L}{3}$.
We claim it is possible  choose $N_{0}$ sufficiently large  that the following two  conditions are both satisfied when  $n>N_{0}$:

\begin{description}
\item{ Condition 1:}
\label{cond1}
$\frac{\eta_{g}(v)}{\sqrt{n}}<\frac{\delta}{10g} $

% Never used!
%\item{ Condition 2:}
%\label{cond2} $ \frac{\eta_{g}(n)}{\sqrt{n}}< 5g$

\item{ Condition 2:}
\label{cond3} 
there is a prime $q_{n}$ such that
        \begin{equation}
        \label{primeq}
        \sqrt{\frac{n}{v}}\leq q_{n} \leq    ( 1+\frac{\delta}{10g})\sqrt{\frac{n}{v}}.
        \end{equation}
\end{description}

It is clear that we can satisfy the first condition: if $N_{0}>(\frac{10g\eta_{g}(v)}{\delta})^{2}$
then by elementary algebra $\frac{\eta_{g}(v)}{\sqrt{n}}<\frac{\delta}{10g}$ is satisfied for 
all $n>N_{0}$. 
% The  (deleted )second condition follows from Corollary \ref{upper}.
%rom the definition of $U$ and  the  inequality $U\leq 4g$ in display \ref{LUbounds}.
The second  condition is highly non-trivial, but it follows easily from 
well known results on the distribution of prime numbers.  For example,  Baker, Harman and Pintz \cite{BHP-2001}
proved that, for all sufficiently large real numbers $x$,  there is at least one  prime number in the interval $[x-x^{.525}, x].$  Choose $x_{n}=(1+\frac{\delta}{10g})\sqrt{\frac{n}{v}}$, and $c=(1+\frac{\delta}{10g})^{-1}$. 
 The interval  $[cx_{n},x_{n}]=[\sqrt{\frac{n}{v}},     ( 1+\frac{\delta}{10g})\sqrt{\frac{n}{v}}]$  has length
  $(1-c)x_{n}$. Since $x_{n}^{.525}=o(x_{n})$, it is clear that $[x_{n}-x_{n}^{.525},x_{n}]\subseteq [cx_{n},x_{n}]$
for all sufficiently large $n$. Therefore the  second condition is  satisfied.

\medskip
Following    \cite{Mirsky,Redei-Renyi}, we use Singer's   theorem to prove Lemma 4 below.
It is sometimes convenient to adopt the more compact notation $v^{*,g}$ for $\eta_{g}(v)$.

 \begin{lemma}(Singer \cite{Singer-38})\label{singer}
  If $q$ is a prime number, and $m=q^{2}+q+1$,
  then there are $q+1$ integers $a_{0},a_{1},\dots ,a_{q}$ such that 
  the $q^{2}+q$ differences $a_{i}-a_{j}$ are congruent mod $m$ to 
  the numbers $1,2,3,\dots, q^{2}+q$ in some order.
 \end{lemma}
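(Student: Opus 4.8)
The plan is to realize the cyclic group $\mathbb{Z}/m\mathbb{Z}$, with $m = q^2+q+1$, as the points of the projective plane $PG(2,q)$ via a \emph{Singer cycle}, and to take the set $\{a_0,\dots,a_q\}$ to be the index set of a single line. Concretely, I would work in the field $\mathbb{F}_{q^3}$, whose multiplicative group is cyclic of order $q^3 - 1 = (q-1)(q^2+q+1) = (q-1)m$. Fix a generator $\theta$. The scalars $\mathbb{F}_q^*$ form the unique subgroup of order $q-1$, which is therefore $\langle \theta^m\rangle$; consequently $\theta^s \in \mathbb{F}_q^*$ if and only if $m \mid s$, and the cosets $\theta^s\mathbb{F}_q^*$ (the ``points'') are parametrized bijectively by $s \bmod m$. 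Multiplication by $\theta$ thus induces a cyclic permutation $\sigma$ of order exactly $m$ on these points.

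Next I would fix a $2$-dimensional $\mathbb{F}_q$-subspace $W \subseteq \mathbb{F}_{q^3}$ (a ``line''). Its $q^2 - 1$ nonzero vectors split into $(q^2-1)/(q-1) = q+1$ cosets of $\mathbb{F}_q^*$, and since $W$ is an $\mathbb{F}_q$-subspace the condition $\theta^s \in W$ depends only on $s \bmod m$; this yields a well-defined $D = \{a_0,\dots,a_q\} \subseteq \mathbb{Z}/m\mathbb{Z}$ of size $q+1$. The claim to prove is that $D$ is a \emph{perfect difference set}: each nonzero residue equals $a_i - a_j$ for exactly one ordered pair. Because no two $a_i$ coincide mod $m$, every difference with $i \neq j$ is nonzero, and there are exactly $q^2 + q = m - 1$ such ordered differences among the $m-1$ nonzero residues. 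Hence it suffices to show that the differences are distinct.

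For distinctness I would translate $a_i - a_j \equiv t$ into the statement that $\sigma^t$ carries the point $\theta^{a_j}\mathbb{F}_q^*$ of $W$ to the point $\theta^{a_i}\mathbb{F}_q^*$, which also lies in $W$; thus the multiplicity of $t$ equals the number of points contained in $W \cap \theta^{-t}W$. For $t \not\equiv 0$, both $W$ and $\theta^{-t}W$ are $2$-dimensional subspaces of the $3$-dimensional space $\mathbb{F}_{q^3}$, so their intersection has dimension at least $2+2-3 = 1$; provided $\theta^{-t}W \neq W$, this intersection is exactly $1$-dimensional, i.e.\ a single point, giving multiplicity exactly one, as required.

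The main obstacle, and the only genuinely field-theoretic step, is verifying that $\theta^t W \neq W$ for every $t \not\equiv 0 \pmod m$, i.e.\ that the Singer cycle acts freely on lines. Here I would use that $\theta$ is a primitive element, so it generates $\mathbb{F}_{q^3}$ as a field over $\mathbb{F}_q$; since $[\mathbb{F}_{q^3}:\mathbb{F}_q] = 3$ is prime, any $\theta^t \notin \mathbb{F}_q$ also generates $\mathbb{F}_{q^3}$. If $\theta^t W = W$ with $0 \neq w \in W$, then invariance of $W$ under multiplication by $\theta^t$ forces $W \supseteq \mathbb{F}_q(\theta^t)\cdot w = \mathbb{F}_{q^3}\cdot w$, so $\dim W = 3$, a contradiction; and $\theta^t \in \mathbb{F}_q$ happens only when $m \mid t$. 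This closes the argument. Although the lemma is stated for $q$ prime, the proof uses only that $\mathbb{F}_{q^3}$ exists, so it holds verbatim for any prime power $q$.
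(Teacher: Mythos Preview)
Your argument is correct and is essentially Singer's original construction: realize the points of $PG(2,q)$ as the quotient $\mathbb{F}_{q^3}^*/\mathbb{F}_q^*$, take $D$ to be the residues corresponding to a fixed line $W$, and use the fact that two distinct lines in a projective plane meet in exactly one point to conclude that each nonzero $t$ occurs exactly once as a difference. The field-theoretic step you single out---that $\theta^t W = W$ forces $m\mid t$ because $[\mathbb{F}_{q^3}:\mathbb{F}_q]$ is prime---is exactly the right way to handle it, and your observation that the argument works for any prime power $q$ is also correct.

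There is nothing to compare against, however: the paper does not prove this lemma. It is quoted as a black box from Singer's 1938 paper and then applied in Lemma~\ref{product} to build a $g$-difference basis for $[mv]$ from one for $[v]$. So your write-up goes strictly beyond what the paper supplies.
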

 
 Without loss of generality, assume that the numbers $a_{i},a_{j}$ in Singers Theorem are elements
of $[m]$.
 
 \begin{lemma} \label{product}
 If ${\cal B}=\lbrace b_{1},b_{2},\dots ,b_{v^{*,g}}\rbrace$ is a $g$-difference basis for $[v]$, then the set
${\cal F} =\lbrace  a_{i}+mb_{j}: 1\leq i\leq q, 1\leq j\leq v^{*,g}\rbrace $ forms a $g$-difference basis for $[mv]$.
\end{lemma}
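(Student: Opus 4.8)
The plan is to exploit the fact that Singer's theorem makes $\{a_{0},\dots,a_{q}\}$ a \emph{perfect} difference set modulo $m$: every nonzero residue class mod $m$ is realized exactly once by an ordered difference $a_{i}-a_{i'}$ with $i\neq i'$, while the class $0$ is realized by each of the $q+1$ diagonal pairs $i=i'$. Since every element of $\mathcal{F}$ has the shape $a_{i}+mb_{j}$, a difference of two elements of $\mathcal{F}$ factors as
\[
(a_{i}+mb_{j})-(a_{i'}+mb_{j'})=(a_{i}-a_{i'})+m(b_{j}-b_{j'}),
\]
so the residue mod $m$ is controlled entirely by the Singer part while the bulk is controlled by $\mathcal{B}$. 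I would fix a target $x\in[mv]$ and use the division algorithm to write $x=mQ+R$ with $0\le R<m$ and $0\le Q\le v$; the strategy is to choose a Singer pair to absorb the residue $R$ and then invoke the $g$-difference-basis property of $\mathcal{B}$ to manufacture at least $g$ representations of the remaining multiple of $m$.

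First I would dispose of the case $R\neq 0$. Here there is a \emph{unique} ordered Singer pair $(i,i')$ with $a_{i}-a_{i'}\equiv R\pmod m$; its actual integer value $d$ is either $R$ or $R-m$, since $a_{i},a_{i'}\in[m]$. Solving $(a_{i}-a_{i'})+m(b_{j}-b_{j'})=x$ then forces $b_{j}-b_{j'}=w$, where $w=Q$ if $d=R$ and $w=Q+1$ if $d=R-m$. A short computation shows that in either case $w\in\{0,1,\dots,v\}$. If $w\in[v]$, the hypothesis that $\mathcal{B}$ is a $g$-difference basis for $[v]$ supplies at least $g$ pairs $(b_{j},b_{j'})$, each of which combines with the single Singer pair to give a representation of $x$. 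If $w=0$ (which can occur only when $x=R<m$), I would instead use the $v^{*,g}$ diagonal pairs $j=j'$; this still yields at least $g$ representations because $\eta_{g}(v)>g$ (indeed any $g$-difference basis for $[v]$ must have more than $g$ elements, e.g.\ to realize the difference $1$ at least $g$ times).

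For the remaining case $R=0$, i.e.\ $x=mQ$ with $1\le Q\le v$, the residue $0$ forces $i=i'$, so $a_{i}-a_{i'}=0$ and we need $b_{j}-b_{j'}=Q\in[v]$. The $g$-difference-basis property again gives at least $g$ suitable pairs $(b_{j},b_{j'})$, and these can be paired with any of the $q+1$ diagonal Singer indices, producing at least $g$ representations of $x$. Collecting the three cases shows $r_{\mathcal{F}-\mathcal{F}}(x)\ge g$ for every $x\in[mv]$, which is exactly the claim.

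I expect the main obstacle to be the bookkeeping at the boundaries rather than the core idea. The delicate points are: (i) the sign ambiguity in the actual value of the Singer difference ($R$ versus $R-m$), which shifts the required quotient $w$ by one and must be tracked to keep $w$ inside the range $[v]$ where the covering hypothesis on $\mathcal{B}$ applies; and (ii) the degenerate situations $w=0$ and $x$ close to $mv$, where one must check that the available multiplicity (the $v^{*,g}$ diagonal pairs, or the exact fit $Q=v$) is still at least $g$. Verifying that these edge cases never fall through the cracks is the part of the argument that requires the most care.
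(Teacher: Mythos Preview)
Your proposal is correct and follows essentially the same route as the paper: write $x=mQ+R$ via the division algorithm, use the Singer perfect difference set to absorb the residue $R$ (splitting into the subcases where the realized integer difference is $R$ or $R-m$), and then invoke the $g$-difference-basis property of $\mathcal{B}$ on the resulting quotient $w\in\{Q,Q+1\}$. Your handling of the boundary case $w=0$ is in fact more explicit than the paper's, which tacitly uses $|\mathcal{B}|=v^{*,g}\ge g$ without comment.
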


\begin{proof} 
Suppose $s\in [mv].$ We need to verify that $s$ has at least $g$ representations as the difference of two elements of ${\cal F}$.

\begin{description}

\item{\underline{Case 1:$s=mv$}} \vskip0cm
Because ${\cal B}$ is a $g$-difference basis for $ [v]$, we have
$v=b_{i}-b_{j}$ for at least $g$ pairs $i,j$. But in the special case $s=mv$, this means that
\[  s=\left( a_{i}+b_{i}m\right)-\left(a_{i}+b_{j}m\right) \in {\cal F}-{\cal F},\]
for each of these pairs $i,j$.  

\item{\underline{Case 2: $s<mv$}} \vskip0cm
If we divide $s$ by $m$ using the division algorithm, we get
$ s=km+r$  where $0\leq r<m$.  Singer's Theorem guarantees that we have 
either $r=a_{h}-a_{\ell}$ for some $h,\ell$, or $r-m=a_{h}-a_{\ell}$ for some $h,\ell$. 
These two subcases are considered separately.

	\begin{description}

	\item{\underline{Subcase 2a: $s<mv$ and $r=a_{h}-a_{\ell}$}}\vskip0cm
	Recall  that $s=km+r$ for a non-negative $r$, and that and  $s<vm$. 
	It follows that  $k<v$.  Because ${\cal B}$ is a $g$-difference basis for $ [v]$, we  have
	$k=b_{i}-b_{j}$ for at least $g$ pairs $i,j$.  Plugging into $s=km+r$, we get
	\[ 
	s= (b_{i}-b_{j})m+(a_{h}-a_{\ell})= \left(a_{h}+b_{i}m\right) -\left(a_{\ell}+b_{j}m\right)\in {\cal F}-{\cal F}.
	\]
	
	\item{\underline{Subcase 2b: $s<mv$ and $r-m=a_{h}-a_{\ell}$}}
	\vskip0cm Plugging $m-r=a_{h}-a_{\ell}$ into $s=km+r$, we get
	\[ s=(k+1)m+a_{\ell}-a_{h}.\]
	Because $s<mv$, we also have $k<v$.
  	Because $k<v$, and ${\cal B}$ is a $g$-difference basis for $[v]$, we have $k+1=b_{i}-b_{j}$ for at least $g$ pairs $i,j$. 
  	Therefore
  	\[ s=\left(a_{\ell}+b_{i}m\right)-\left(a_{h}+b_{j}m\right)\in {\cal F}-{\cal F}.\]
 	 \end{description}
	 
\end{description}
 \end{proof}

  Lemma \ref{product} is needed to prove the following corollary.
\vskip.5cm

  \begin{corollary}\label{gbound}
    $\eta_{g}(n)\leq  q_{n}\eta_{g}(v)$
  %$\eta_{g}(n)\leq  (q_{n}+1)\eta_{g}(v)$
  \end{corollary}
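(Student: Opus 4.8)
The plan is to feed a minimum-size $g$-difference basis for $[v]$ into Lemma \ref{product}, using the prime $q_n$ supplied by Condition 2. First I would fix a set $\mathcal{B}=\{b_1,\dots,b_{v^{*,g}}\}$ that is a $g$-difference basis for $[v]$ of minimum cardinality, so that $|\mathcal{B}|=\eta_g(v)=v^{*,g}$. Taking $q=q_n$ and $m=q_n^2+q_n+1$ in Singer's theorem and in Lemma \ref{product}, I obtain the set $\mathcal{F}=\{a_i+mb_j:1\le i\le q_n,\ 1\le j\le v^{*,g}\}$, which Lemma \ref{product} certifies to be a $g$-difference basis for $[mv]$. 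Since $\mathcal{F}$ is the image of the index set $\{1,\dots,q_n\}\times\{1,\dots,v^{*,g}\}$, its cardinality is at most $q_n\,v^{*,g}=q_n\eta_g(v)$.

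The one substantive point is to pass from $[mv]$ down to $[n]$. Here I would invoke the elementary monotonicity fact that if a set is a $g$-difference basis for $[M]$ and $M\ge n$, then it is also a $g$-difference basis for $[n]$: every $x\in[n]$ lies in $[M]$ and therefore already has at least $g$ representations as a difference of two elements. Hence it suffices to check $mv\ge n$. The left-hand bound in (\ref{primeq}) gives $q_n\ge\sqrt{n/v}$, so $m=q_n^2+q_n+1\ge q_n^2\ge n/v$, whence $mv\ge n$. Consequently $\mathcal{F}$ is a $g$-difference basis for $[n]$, and
\[
\eta_g(n)\le|\mathcal{F}|\le q_n\,\eta_g(v),
\]
which is exactly the claim.

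I do not expect a genuine obstacle beyond correctly assembling these pieces: the only inequality to verify is $mv\ge n$, and this is precisely what the lower bound $\sqrt{n/v}\le q_n$ in Condition 2 was arranged to guarantee. Note that the upper bound on $q_n$ in (\ref{primeq}) plays no role in this corollary; it will instead be used in the subsequent argument to bound the size $q_n\eta_g(v)$ and thereby show that $\tfrac{\eta_g(n)}{\sqrt{n}}$ is undersized for all large $n$.
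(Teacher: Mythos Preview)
Your proposal is correct and follows essentially the same route as the paper: apply Lemma~\ref{product} with $q=q_n$ to a minimal $g$-difference basis for $[v]$, then use the lower bound $q_n\ge\sqrt{n/v}$ from (\ref{primeq}) to get $mv\ge n$ and hence $\eta_g(n)\le\eta_g(mv)\le|\mathcal{F}|\le q_n\eta_g(v)$. Your write-up is in fact slightly more careful than the paper's in writing $|\mathcal{F}|\le q_n\eta_g(v)$ rather than asserting equality, and your closing remark about the separate role of the upper bound on $q_n$ is accurate.
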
 
  
   \begin{proof} 
  Recall that $m=q_{n}^{2}+q_{n}+1$, where $q_{n}$ satisfies the  first inequality in Condition 3,
  namely $\sqrt{n/v}<q_{n}$. Hence $n<q_{n}^{2}v < (q_{n}^{2}+q_{n}+1)v=mv$.   
  From the definition of $\eta_{g}$, it follows that $n^{*,g}\leq (mv)^{*,g}$.
  By Lemma \ref{product}, $(mv)^{*,g}\leq |{\cal F}|=q_{n}v^{*,g}=q_{n}\eta_{g}(v)$. 
  The Corollary now follows.
\end{proof}

We now have all the information that is needed to prove Kravitz's conjecture.

\begin{theorem1}%\label{N theorem}
The limit $\lim\limits_{n\rightarrow \infty}\frac{\eta_{g}(n)}{\sqrt{n}}$ exists and is a positive real number.
\end{theorem1}
\begin{proof}[Proof of Theorem \ref{Ntheorem}]
Recall the proof outline in subsection 2.2, and our choice of $\delta=\frac{U-L}{3}$.
All that remains is to show that  $\frac{\eta_{g}(n)}{\sqrt{n}}$ is undersized for all sufficiently large $n$. 
By Corollary \ref{gbound} we have  
$\eta_{g}(n)\leq  q_{n}\eta_{g}(v)$, and  by Condition 2, we have
$q_{n}\leq (1+\frac{\delta}{10g})\sqrt{\frac{n}{v}}.$
Therefore
  \[ 
  \frac{\eta_{g}(n)}{\sqrt{n}} \leq  ( 1+\frac{\delta}{10g})\frac{\eta_{g}(v)}{\sqrt{v}}
  \]
 Now use the inequality  $\frac{\eta_{g}(v)}{\sqrt{v}}< L+\frac{\delta}{10g}$ from Equation  (\ref{v*})    to get
  \[ 
  \frac{\eta_{g}(n)}{\sqrt{n}}  \leq \left( 1+\frac{\delta}{10g}\right) \left( L+\frac{\delta}{10g}\right)
=L+\delta\left(    \frac{L}{10g}+\frac{1}{10g}+          \frac{\delta}{(10g )^{2}  }       \right)\]

By Equation (\ref{LUbounds}) we have $L<4g$ and $\delta<4g$.
Therefore
\[  \frac{\eta_{g}(n)}{\sqrt{n}} < L+\delta\left(    \frac{4}{10g}+\frac{1}{10g}+          \frac{4}{(10g )^{2}  }       \right)    
 < L+\delta = L+\frac{U-L}{3}.\]

This shows that $ \frac{\eta_{g}(n)}{\sqrt{n}}$ is undersized, and completes the proof.
\end{proof}

 \section{Difference Bases in $\mathbb{F}_p^n$}\label{vector space section}
 
 In this section, we prove Theorem \ref{vector-space-theorem}. We will need a (slightly easier) version of Lemma \ref{product}. The proof follows from the definitions. 

\begin{lemma}\label{product2}
    Let $G_1$ and $G_2$ be abelian groups and $A_i\subseteq G_i$ be $g_i$-difference bases of $G_i$. Then $A_1 \times A_2$ is a $g_1g_2$-difference basis of $G_1\times G_2$. 
\end{lemma}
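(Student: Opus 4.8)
The plan is to reduce the claim to the elementary observation that the difference representation function is multiplicative under direct products. First I would record that in $G_1 \times G_2$ subtraction is performed coordinatewise, so for a fixed target $(x_1, x_2) \in G_1 \times G_2$, an ordered pair $\bigl((a_1, b_1), (a_2, b_2)\bigr) \in (A_1 \times A_2) \times (A_1 \times A_2)$ satisfies $(a_1, b_1) - (a_2, b_2) = (x_1, x_2)$ if and only if the two coordinate equations $a_1 - a_2 = x_1$ (in $G_1$) and $b_1 - b_2 = x_2$ (in $G_2$) hold simultaneously.

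Next I would make the counting explicit using the representation function $r_{A-A}$ defined in the introduction. Since the two coordinate conditions are independent of one another, the set of representations of $(x_1, x_2)$ as a difference from $A_1 \times A_2$ is in bijection with the Cartesian product of the set of representations of $x_1$ from $A_1$ and the set of representations of $x_2$ from $A_2$. This yields the factorization
\[
r_{(A_1 \times A_2) - (A_1 \times A_2)}(x_1, x_2) = r_{A_1 - A_1}(x_1) \cdot r_{A_2 - A_2}(x_2).
\]
Finally I would invoke the hypotheses: because $A_i$ is a $g_i$-difference basis of the \emph{whole} group $G_i$, we have $r_{A_i - A_i}(x_i) \ge g_i$ for every $x_i \in G_i$. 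Multiplying these two lower bounds gives $r_{(A_1 \times A_2) - (A_1 \times A_2)}(x_1, x_2) \ge g_1 g_2$ for all $(x_1, x_2) \in G_1 \times G_2$, which is exactly the assertion that $A_1 \times A_2$ is a $g_1 g_2$-difference basis of $G_1 \times G_2$.

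There is no genuine obstacle here, consistent with the remark that the proof follows from the definitions. The only two points meriting any care are that the relevant index set $S$ is the full group $G_i$ in each factor (so that the bound $r_{A_i - A_i} \ge g_i$ is available for every target element, not merely those in some proper subset), and that the correspondence between product representations and pairs of coordinate representations is a genuine bijection rather than just an inequality, so that the product bound $g_1 g_2$ is attained and not merely approached.
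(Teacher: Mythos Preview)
Your proof is correct and is exactly what the paper has in mind when it says ``the proof follows from the definitions'': you have made explicit the multiplicativity of the representation function under direct products and then applied the coordinatewise lower bounds. There is nothing further to add.
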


 Note that if $p$ is prime we may take a difference basis of the integers up to $p$ and consider them as field elements, and this will constitute a difference basis of $\mathbb{F}_p$. By Lemma \ref{defBn}, we have 
 \begin{equation}\label{eq eta Fp}
     \eta_1(\mathbb{F}_p) \leq 2\lceil \sqrt{p} \rceil.
 \end{equation}

\begin{lemma}\label{lem recursive}
    Let $q$ be an odd prime power. Then 
    \[
    \eta_1(\mathbb{F}_q \times \mathbb{F}_q) \leq q + \eta_1(\mathbb{F}_q).
    \]
\end{lemma}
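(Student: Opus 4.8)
The plan is to exhibit an explicit $1$-difference basis of $\mathbb{F}_q \times \mathbb{F}_q$ assembled from a low-degree curve together with a small one-dimensional correction. Identify $\mathbb{F}_q \times \mathbb{F}_q$ with the affine plane over $\mathbb{F}_q$ and consider the parabola $P = \{(t, t^2) : t \in \mathbb{F}_q\}$, which has exactly $q$ points. First I would analyze its difference set: for $t, s \in \mathbb{F}_q$ one has $(t,t^2) - (s,s^2) = (t-s,\,(t-s)(t+s))$. The key point is that for any target $(d,e)$ with $d \neq 0$, solving $t - s = d$ and $t + s = e/d$ is always possible because $q$ is odd, so $2$ is invertible and $t = (d + e/d)/2$, $s = (e/d - d)/2$ lie in $\mathbb{F}_q$. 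Hence $P$ alone already represents every difference whose first coordinate is nonzero.

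Next I would pin down exactly what $P$ misses. When $d = 0$ the equation $t - s = 0$ forces $t = s$, so among the ``vertical'' differences $P$ represents only $(0,0)$; the elements $(0,e)$ with $e \neq 0$ are left uncovered. To patch these I would take a $1$-difference basis $C \subseteq \mathbb{F}_q$ of minimum size $\eta_1(\mathbb{F}_q)$ and adjoin the set $\{0\} \times C$. Since $C - C = \mathbb{F}_q$, the differences $(0,\,c - c')$ sweep out all of $\{0\} \times \mathbb{F}_q$, covering precisely the missing vertical differences. Setting $A = P \cup (\{0\} \times C)$, every element of $\mathbb{F}_q \times \mathbb{F}_q$ then lies in $A - A$, so $A$ is a $1$-difference basis, and $|A| \leq |P| + |\{0\} \times C| = q + \eta_1(\mathbb{F}_q)$, which is the claimed bound. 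Note that the union bound suffices here, so no accounting for overlaps between $P$ and $\{0\} \times C$ is needed.

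The one genuinely creative step is the choice of the parabola: its self-differences factor as $(t-s)(t+s)$, and the odd characteristic is exactly what guarantees that, once the first coordinate $d = t - s$ is fixed and nonzero, the product $(t-s)(t+s)$ runs over every possible second coordinate as $t+s$ varies freely. I do not anticipate a serious obstacle beyond verifying this solvability, which reduces to the invertibility of $2$; the residual vertical line is a negligible correction dispatched by the one-dimensional basis $C$. (The same idea would presumably fail verbatim in characteristic $2$, consistent with the hypothesis that $q$ be odd.)
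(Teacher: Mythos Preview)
Your argument is correct and is essentially the same as the paper's: the paper also takes $A=\{(x,x^2):x\in\mathbb{F}_q\}$ together with $\{(0,b):b\in B\}$ for a minimum difference basis $B$ of $\mathbb{F}_q$, and verifies coverage of $(a,b)$ with $a\neq 0$ via the same solvability computation (equivalent to your $t=(d+e/d)/2$, $s=(e/d-d)/2$), then handles the vertical line with $B$.
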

\begin{proof}
    Let $B$ be a difference basis of $\mathbb{F}_q$ with $|B| = \eta_1(\mathbb{F}_q)$. Let $A = \{ (x, x^2): x\in \mathbb{F}_q\}$. Then we claim that the set 
    \[
    A \cup \{(0, b): b\in B\}
    \]
    is a difference basis for $\mathbb{F}_q\times \mathbb{F}_q$. To see this, for any $a\not=0$ and $b\in \mathbb{F}_q$, it is easy to check that $(a,b) \in A-A$. In particular, if $x=\frac{1}{2}\left(a + ba^{-1}\right)$ and $y = x-a$, then we have 
    \[
    (x, x^2) - (y,y^2) = (a,b).
    \]
    By definition of $B$, for any $b\in \mathbb{F}_q$ there are two elements $b_1, b_2 \in B$ such that 
    \[
    (0, b_1) - (0, b_2) = (0, b).
    \]
    Hence any $(a,b) \in \mathbb{F}_q \times \mathbb{F}_q$ may be written as a difference of two elements in the set. Since $|A| = q$ and $|B| = \eta_1(\mathbb{F}_q)$, the lemma follows.
\end{proof}

\begin{lemma}\label{lem constant factor upper bound}
    Let $p$ be prime and $q$ a power of $p$. Then 
    \[
    \eta_1(\mathbb{F}_q) \leq 100 \sqrt{q}.
    \]
\end{lemma}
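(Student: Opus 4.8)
The plan is to prove the stronger quantitative statement that $\eta_1(\mathbb{F}_{p^n}) \le C\,p^{n/2}$ for a universal constant $C$, by strong induction on $n$, and only at the end to observe that the $C$ I produce is comfortably below $100$. Throughout I will use that $\eta_1$ depends only on the \emph{additive} group, so I am free to replace $\mathbb{F}_{p^{a+b}}$ by the isomorphic group $\mathbb{F}_{p^a}\times\mathbb{F}_{p^b}$. Writing $q=p^n$, the base case $n=1$ is exactly \eqref{eq eta Fp}, which gives $\eta_1(\mathbb{F}_p)\le 2\lceil\sqrt p\rceil\le 4\sqrt p$ since $\lceil\sqrt p\rceil\le\sqrt p+1\le 2\sqrt p$.

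For the inductive step I split on the parity of $n$, and it is convenient to normalize by setting $f(m):=\eta_1(\mathbb{F}_{p^m})/p^{m/2}$. If $n=2j$ is even, then $\mathbb{F}_{p^{2j}}\cong\mathbb{F}_{p^j}\times\mathbb{F}_{p^j}$, so Lemma \ref{lem recursive} (with $q=p^j$) yields $\eta_1(\mathbb{F}_{p^{2j}})\le p^j+\eta_1(\mathbb{F}_{p^j})$; dividing by $p^{n/2}=p^j$ this reads $f(2j)\le 1+f(j)\,p^{-j/2}$. If $n$ is odd I peel off one coordinate: $\mathbb{F}_{p^n}\cong\mathbb{F}_{p^{n-1}}\times\mathbb{F}_p$, and Lemma \ref{product2} together with \eqref{eq eta Fp} gives $f(n)\le f(n-1)\,f(1)\le 4\,f(n-1)$, reducing the odd case to the even case $n-1$. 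The crucial point is that the constants do not blow up: the factor $p^{-j/2}\le 3^{-j/2}$ in the even recursion decays geometrically and swamps the bounded growth introduced by the odd steps. Concretely, I would carry the two invariants $f(n)\le 5$ for even $n$ and $f(n)\le 20$ for odd $n$: the odd bound is immediate from $f(n)\le 4f(n-1)\le 4\cdot 5$, while the even bound follows from $f(2j)\le 1+f(j)\,3^{-j/2}$, which is at most $1+5\cdot 3^{-1}$ when $j$ is even and at most $1+20\cdot 3^{-3/2}<5$ when $j\ge 3$ is odd (the small cases $n=1,2$ being checked directly). Since $\max(5,20)<100$, this settles every odd prime $p$.

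The single genuine obstacle is the prime $p=2$, where Lemma \ref{lem recursive} is simply unavailable: its construction solves $x=\tfrac12(a+ba^{-1})$, which is meaningless in characteristic $2$, and indeed the parabola $\{(x,x^2)\}$ degenerates because squaring is additive, so it is the graph of a linear map whose difference set is one–dimensional in each direction. The fix I would pursue is to replace $x^2$ by a quadratic APN map such as $x\mapsto x^3$: in $\mathbb{F}_{2^m}\times\mathbb{F}_{2^m}$ the associated difference map $x\mapsto x^3+(x+a)^3=ax^2+a^2x+a^3$ is $\mathbb{F}_2$–linear up to a constant, hence $2$–to–$1$ onto a coset of a hyperplane for each $a\ne 0$, so a single such graph already covers ``half'' of every nonzero direction. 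The plan is then to take the union of a bounded number of such graphs (chosen so that the covered cosets fill the whole space), together with a difference basis of the line $a=0$; this has size $O(2^{m})=O(\sqrt{2^{2m}})$, and the parity of the exponent is handled exactly as in the odd–characteristic argument via Lemma \ref{product2}. The hard part here is purely the characteristic–$2$ covering analysis, namely arranging that the hyperplanes swept out by the chosen graphs are complementary in \emph{every} direction simultaneously; this is where the $p=2$ case genuinely departs from the clean recursion above, while the final constant still remains far below $100$.
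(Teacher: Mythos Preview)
For odd primes your argument is essentially the paper's: the same strong induction on the exponent, the same appeal to Lemma~\ref{lem recursive} to halve an even exponent, and the same use of Lemma~\ref{product2} to peel off a single $\mathbb{F}_p$ factor in the odd case. Your bookkeeping (tracking $f(n)=\eta_1(\mathbb{F}_{p^n})/p^{n/2}$ with separate even/odd invariants $5$ and $20$) is a cosmetic variant of the paper's cruder device of declaring the bound trivially true once $p^{n/2}\le 100$ and then checking $p^k+100p^{k/2}<100p^k$ and $3\sqrt{p}\,(p^k+100p^{k/2})<100p^{k+1/2}$ directly; the structure of the induction is identical.

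You are also correct about the obstruction at $p=2$: Lemma~\ref{lem recursive} is stated and proved only for odd $q$, so the paper's own proof of the present lemma, exactly like yours, is complete only for odd $p$. The paper does not flag this, but it also never uses the lemma for $p=2$ (Theorem~\ref{vector-space-theorem} assumes $p$ odd), so the mismatch is between the lemma's stated generality and its proof rather than a gap in the paper's main results. Your APN-based sketch for characteristic~$2$ is a reasonable heuristic, but as you say the simultaneous covering of all directions is not carried out, so the $p=2$ case remains open in your write-up as well; relative to what the paper actually proves and needs, you are not missing anything.
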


\begin{proof}
    Let $q = p^n$. We will prove the lemma by induction on $n$. When $n=1$, we may use Equation \eqref{eq eta Fp}. When $n=2$, by Lemma \ref{lem recursive} and Equation \eqref{eq eta Fp}, we have that 
    \[
    \eta_1(\mathbb{F}_{p^2}) = \eta_1(\mathbb{F}_p \times \mathbb{F}_p) \leq p + 2\lceil\sqrt{p}\rceil < 100p.
    \]
    Note that the inequality is trivial if $p^{n/2} \leq 100$, so we may also assume $p^{n/2} \geq 100$. We will do the case that $n$ is even and odd separately. First, by Lemma \ref{lem recursive} and the inductive hypothesis, we have that 
    \[
    \eta_1(\mathbb{F}_{p^{2k}})  = \eta_1(\mathbb{F}_{p^k} \times \mathbb{F}_{p^k}) \leq p^k + \eta_1(\mathbb{F}_{p^k}) \leq p^k + 100 p^{k/2} < 100p^k.
    \]
    For $n$ odd, we use Equation \eqref{eq eta Fp} to let $A$ be a difference basis of $\mathbb{F}_p$ of size at most $3\sqrt{p}$. As in the last case, let $B$ be a difference basis of $\mathbb{F}_{p^{2k}}$ of size at most $p^k + 100p^{k/2}$. Then by Lemma \ref{product2}, $A\times B$ is a difference basis of $\mathbb{F}_p \times \mathbb{F}_{p^{2k}} \cong \mathbb{F}_{p^{2k+1}}$. Hence
    \[
    \eta_1(\mathbb{F}_{p^{2k+1}}) \leq |A||B| \leq 3\sqrt{p}(p^k + 100p^{k/2}) < 100 p^{k+1/2}.
    \]
\end{proof}

By Lemmas \ref{lem recursive} and \ref{lem constant factor upper bound}, we have the following corollary.
\begin{corollary}\label{square corollary}
    Let $p$ be prime. Then 
    \[
    \eta_1(\mathbb{F}_{p^t} \times \mathbb{F}_{p^t}) = (1+o(1))p^t,
    \]
    where the $o(1)$ term goes to $0$ as $t$ goes to infinity. 
\end{corollary}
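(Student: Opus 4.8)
The plan is to prove matching upper and lower bounds, each of the form $(1\pm o(1))p^t$, and sandwich. The upper bound is immediate from the two cited lemmas, while the lower bound is the group analogue of the trivial counting bound in Lemma \ref{lower}.

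For the upper bound, write $q=p^t$; for $p$ odd this is an odd prime power, so that Lemma \ref{lem recursive} applies to the group $\mathbb{F}_{p^t}\times\mathbb{F}_{p^t}$. I would invoke that lemma and then control the leftover term $\eta_1(\mathbb{F}_{p^t})$ via Lemma \ref{lem constant factor upper bound}:
\[
\eta_1(\mathbb{F}_{p^t}\times\mathbb{F}_{p^t}) \le p^t + \eta_1(\mathbb{F}_{p^t}) \le p^t + 100\,p^{t/2} = p^t\bigl(1 + 100\,p^{-t/2}\bigr).
\]
As $t\to\infty$ the error $100\,p^{-t/2}\to 0$, so the right-hand side is $(1+o(1))p^t$.

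For the lower bound, let $A$ be any $1$-difference basis of $G=\mathbb{F}_{p^t}\times\mathbb{F}_{p^t}$. Each of the $|G|-1=p^{2t}-1$ nonzero elements of $G$ must appear as an ordered difference $a_1-a_2$ with $a_1\ne a_2$, and there are only $|A|(|A|-1)$ such ordered pairs. Hence $|A|^2 > |A|(|A|-1) \ge p^{2t}-1$, giving $|A| > \sqrt{p^{2t}-1} = p^t\sqrt{1-p^{-2t}} \ge p^t\bigl(1-p^{-2t}\bigr)$, which is $(1-o(1))p^t$. Minimizing over all difference bases yields $\eta_1(G)\ge (1-o(1))p^t$, and combining the two bounds gives the claimed $\eta_1(\mathbb{F}_{p^t}\times\mathbb{F}_{p^t}) = (1+o(1))p^t$.

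I do not expect a serious obstacle: all of the real work has been pushed into Lemma \ref{lem constant factor upper bound}, whose inductive construction already supplies the $O(\sqrt{q})$ difference basis of a single copy of $\mathbb{F}_q$ that makes the $\eta_1(\mathbb{F}_{p^t})$ term lower order. The one point worth flagging is that the leading constant in the lower bound is $1$ rather than the $\sqrt{2}$ of Lemma \ref{lower}: in a group of order $N$ one must cover all $N-1$ nonzero elements using the $\approx|A|^2$ ordered differences, rather than the $\binom{|A|}{2}$ positive differences available on the interval $[n]$. The two-sided counting therefore loses the factor of $2$, which is exactly what makes the upper and lower constants coincide at $1$.
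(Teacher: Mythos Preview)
Your proof is correct and follows exactly the route the paper intends: the paper states the corollary as an immediate consequence of Lemma~\ref{lem recursive} and Lemma~\ref{lem constant factor upper bound} without writing out the details, and your expansion of the upper bound via those two lemmas together with the trivial counting lower bound is precisely what is meant. Your observation that the group-theoretic lower bound yields constant $1$ (rather than the $\sqrt{2}$ of Lemma~\ref{lower}) is also correct and is the reason the two bounds match.
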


We are now ready to prove the main result of the section. 

\begin{theorem2}%
%\label{vector space theorem}
    Let $p$ a prime and $g$ a natural number be fixed. Then the limits
    \[
    L_e = \lim_{k\to \infty} \frac{\eta_g\left( \mathbb{F}_p^{2k}\right)}{\sqrt{p^{2k}}}
    \]
    and
    \[
    L_o = \lim_{k\to \infty} \frac{\eta_g\left( \mathbb{F}_p^{2k+1}\right)}{\sqrt{p^{2k+1}}}
    \]
    both exist. 
    
\end{theorem2}

\begin{proof}[Proof of Theorem \ref{vector-space-theorem}]
    The proof is similar to that of Theorem \ref{Ntheorem}. We write the details for the odd case and omit those for the even case which follows in the same way. Let 
    \[
    L = \liminf_{k\to \infty} \frac{\eta_g\left( \mathbb{F}_p^{2k+1}\right)}{\sqrt{p^{2k+1}}}
    \]
    Let $\epsilon > 0$ be arbitrary and assume it is less than $1$. We will show that for $k$ large enough, we have 
    \[
    \frac{\eta_g\left( \mathbb{F}_p^{2k+1}\right)}{\sqrt{p^{2k+1}}} < L+\epsilon.
    \]
   Choose any $\epsilon_1 > 0$ satisfying $\epsilon_1 + L\epsilon_1 + \epsilon_1^2 < \epsilon$. By definition of $L$, there is some $k_0$ such that $\eta_g\left(\mathbb{F}_{p^{2k_0+1}}\right) \leq (L + \epsilon_1) \sqrt{p^{2k_0+1}}$. By Corollary \ref{square corollary}, there is an $t_0$ such that for all $t \geq t_0$ we have that 
    \[
    \eta_1\left( \mathbb{F}_{p^t} \times \mathbb{F}_{p^t}\right) \leq (1+\epsilon_1)p^t.
    \]
    Let $n \geq 2t_0 + 2k_0+1$ be odd. Let $A$ be a $g$-difference basis of $\mathbb{F}_{p^{2k_0+1}}$ of size at most $(L + \epsilon_1) \sqrt{p^{2k_0+1}}$. Since $n - (2k_0 + 1) \geq 2t_0$, there is a difference basis $B$ of $\mathbb{F}_{p^{n - (2k_0 + 1)}}$ of size at most $(1+\epsilon_1) p^{n/2 - k_0 - 1/2}$. By Lemma \ref{product2}, we have that $A\times B$ is a $g$-difference basis of $\mathbb{F}_{p^n}$ and therefore
    \[
   \eta_g(\mathbb{F}_{p^n}) \leq  |A||B| \leq (L + \epsilon_1) \sqrt{p^{2k_0+1}}(1+\epsilon_1) p^{n/2 - k_0 - 1/2} <  (1+\epsilon)p^{n/2}
    \]
\end{proof}

\section{ Bounded Difference Representations}\label{differences section} 
%restarted with MTs original email
%For a subset $A$ of a group define the following function from the group to the integers by
%\[
%r_{A-A}(d) = |\{(a, a') \in A \times A: d = a-a'\}|.
%\]
%Following the notation of \cite{Xu}, for $g, n$ natural numbers define 
%\[
%\alpha_g(n) = \max \{|A|: A\subset [n], r_{A-A}(x) \leq g \mbox{ for all } x\not=0\}.
%\]

  In this section, we prove Theorem \ref{alpha-theorem}. We first prove the upper bound \[\alpha_{g}(n)\leq (1+o_{g}(1))\sqrt{gn}\] in Proposition \ref{upper-alpha-thm}
  and then the lower bound in Proposition \ref{alpha-prop}. 

  \subsection{Upper bound for $\alpha_g(n)$}
  \begin{proposition}
  \label{upper-alpha-thm}
  Fix  a positive integer $g$. Then $\alpha_g(n) \leq (1+o(1))\sqrt{gn}$.
  \end{proposition}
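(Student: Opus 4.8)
The plan is to bound $\alpha_g(n)$ from above via a double-counting argument on the differences produced by a set $A \subseteq [n]$ satisfying $r_{A-A}(x) \leq g$ for all nonzero $x$. Writing $|A| = m$, the set $A$ produces $m(m-1)$ ordered pairs $(a_1, a_2)$ with $a_1 \neq a_2$, each yielding a nonzero difference $a_1 - a_2$ lying in $\{-(n-1), \dots, -1, 1, \dots, n-1\}$. By the hypothesis every such difference is achieved at most $g$ times, so the number of available ordered difference-slots is at most $g \cdot 2(n-1)$. This immediately gives $m(m-1) \leq 2g(n-1)$, which already yields $m \leq (1+o(1))\sqrt{2gn}$ — but the constant is $\sqrt{2}$ too large, matching only the crude counting bound rather than the claimed $\sqrt{gn}$.

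To extract the correct constant I would exploit the fact that the differences are \emph{concentrated near zero}: most pairs of elements of $A$ are close together, so small differences are forced to occur many times and large differences rarely. The key refinement is that the number of ordered pairs realizing a difference of absolute value exactly $d$ is at most $g$, but also trivially at most $2(m - \text{(number of elements within distance } d))$ type bounds are wasteful; instead the sharper route is to sort $A = \{a_1 < a_2 < \cdots < a_m\}$ and observe that consecutive-type differences accumulate. The cleanest approach: for each $j \in \{1, \dots, m-1\}$, the $j$-th smallest gaps contribute differences, and one counts that the multiset of positive differences $\{a_s - a_t : s > t\}$ has size $\binom{m}{2}$ while each value appears at most $g$ times and all values lie in $[1, n-1]$. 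The gain of the factor $2$ comes from comparing to the \emph{perfect} case: I would argue that if $A$ achieves the extremal size then the positive differences must essentially tile $\{1, \dots, n\}$ each exactly $g$ times, and any deviation (boundary effects, the spread being less than $n$) only costs lower-order terms. Concretely, I expect the bound to be sharpened by noting $\binom{m}{2} \leq g \cdot (\text{number of distinct positive differences}) \leq g \cdot \min(n-1, \binom{m}{2})$ is not enough, so one must instead bound the \emph{sum} or use that the diameter of $A$ is at most $n-1$ together with the pigeonhole refinement that positive differences alone number $\binom{m}{2}$ and live in $[1, n-1]$, giving $\binom{m}{2} \leq g(n-1)$ and hence $m^2 \leq (1+o(1)) \cdot 2g(n-1)$ — this is the same factor-$2$ loss.

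The resolution, and the main obstacle, is that naive counting cannot beat $\sqrt{2gn}$; one genuinely needs an averaging or second-moment argument. The plan is to set $r = r_{A-A}$ and use $\sum_{x \neq 0} r(x) = m(m-1)$ together with $\sum_{x} r(x)^2 = \sum_{d} (\text{counting pairs with equal difference})$, combined with the constraint $r(x) \leq g$ and the Plünnecke/Fourier or the elementary observation that the support of $r$ has a restricted structure. The sharpest elementary tool here is likely the identity that relates $A$ to its indicator's autocorrelation: writing $f = \mathbf{1}_A$, we have $r(x) = (f \star f)(x)$ and $\sum_x r(x) = m^2$ with $r(0) = m$. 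Since $r(x) \le g$ for $x \neq 0$ and the values of $x$ with $r(x) > 0$ all satisfy $|x| \le n-1$, the correct extra factor is obtained by the observation that for a Sidon-like near-extremal set the differences cover an interval of length $\approx m^2/g$ but also $\le 2n$, forcing $m^2/g \lesssim n$ rather than $\lesssim 2n$; I would make this precise by a telescoping argument over nested windows. I expect the hardest step to be controlling the boundary and non-uniformity to keep all error terms inside the $o_g(1)$ factor, and I would handle it by passing to the continuous/density formulation and optimizing, deferring the routine estimates.
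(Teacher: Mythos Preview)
Your proposal correctly identifies the obstacle but does not overcome it: everything you actually carry out yields only $m\le(1+o(1))\sqrt{2gn}$, and the rest is a list of directions (second moment, Fourier, ``telescoping over nested windows'') none of which you execute. As you yourself note, the pigeonhole count $\binom{m}{2}\le g(n-1)$ is the best one can do by counting differences alone, so the proposal as written has a genuine gap at exactly the point where the constant $\sqrt{2}$ must be removed.

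The missing idea, which the paper supplies, is Lindstr\"om's trick: sort $A=\{a_1<\cdots<a_k\}$ and look only at the ``short-range'' differences $a_i-a_{i-t}$ for $1\le t\le \ell$, where $\ell$ is a parameter tending to infinity with $\ell=o(k)$. The point is that for each fixed $t$ the inner sum $\sum_{i>t}(a_i-a_{i-t})$ \emph{telescopes} to at most $a_k+\cdots+a_{k-t+1}\le tn$, so the total $\sigma_\ell$ is at most $\binom{\ell+1}{2}n\approx \tfrac{1}{2}\ell^2 n$ rather than the trivial bound of (number of terms)$\times n\approx k\ell n$. On the other side, the $s\approx k\ell$ terms of $\sigma_\ell$ are positive integers each repeated at most $g$ times, forcing $\sigma_\ell\gtrsim s^2/(2g)$. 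Comparing gives $k\ell\approx s\lesssim \ell\sqrt{gn}$, hence $k\le(1+o_g(1))\sqrt{gn}$. The telescoping is precisely what converts the factor $2$ loss into a genuine gain; your sketch never isolates this step, and the Fourier/second-moment route you gesture at would require substantially more work (and a different argument) to reach the same constant.
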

  
  \begin{proof}
  Let $A = \{a_1,\cdots, a_k\}$ be a subset of $[n]$ with $r_{A-A}(x) \leq g$ for all nonzero $x$. 
  Without loss of generality assume that $a_1 < a_2 <\cdots < a_k$. 
  A bound on $k$ will deduced by examining the constraints that the definition of $A$ imposes on the sum
 \begin{equation}
 \label{def-sigma-ell}
  \sigma_{\ell}:{\buildrel {\tt def}\over =}\sum_{t=1}^\ell \sum_{i=t+1}^k (a_i - a_{i-t}),
  \end{equation}
  for an appropriately chosen $\ell$.  
  Let   $s$ be the number of terms  $(a_{i}-a_{i-t})$  in the sum $\sigma_{\ell}$.  
  Note for future reference that, provided $\ell <k$, the inner sum has $k-t$ terms and
  \begin{equation}
  s=\sum\limits_{t=1}^{\ell} (k-t)=k\ell -{\ell+1\choose 2}.
  \end{equation}
   As in \cite{lindstrom}, a  simple upper bound for  $\sigma_{\ell}$ can by obtained by exploiting cancellation in the inner sum on the right side of Equation (\ref{def-sigma-ell}).
    To simplify the calculation, impose the further restriction that $\ell \leq \frac{k-1}{2}$. 
    Since $1\leq t\leq \ell \leq \frac{k-1}{2}$, we have  $k-t>t+1$ and
\begin{equation}
\label{cancellation}
 \sum_{i=t+1}^k (a_i - a_{i-t})\leq a_{k}+a_{k-1}+\dots +a_{k-t+1}.
 \end{equation}
 Because $A\subseteq [n]$, we know that, on the right side of Equation (\ref{cancellation}),  each of the $t$ terms   is at most $n$.
  Therefore
  \begin{equation}
  \label{upperb-sigmal}
  \sigma_{\ell}< \sum_{t=1}^\ell t n= {\ell+1\choose 2}n.
  \end{equation}
  To obtain a lower bound for $\sigma_{\ell}$,  we will arrange the $s$ differences from smallest to largest, then use the inequality $r_{A-A}(x) \leq g$.
  If $d_{i}$ denotes the $i$'th smallest of the $s$ differences, then $\sigma_{\ell}=\sum\limits_{i=1}^{s}d_{i}$ and $d_{1}\leq d_{2}\leq \dots \leq d_{s}$.
  Using the standard notation  $\lfloor x\rfloor $  for the greatest integer less than or equal to $x$,  
  we will insert parentheses so that the sum is partitioned into groups of $g$ consecutive differences as follows:
 \begin{equation}
 \label{partition}
   \sigma_{\ell}\geq \sum\limits_{b=0}^{\lfloor \frac{s}{g}\rfloor -1}\left (\sum\limits_{r=1}^{g} d_{bg+r} \right)
 \end{equation}
 \[ 
 =\left(d_{1}+d_{2}+\cdots d_{g}\right)+ \left(d_{g+1}+d_{g+2}+\cdots +d_{2g}\right)+\left(d_{2g+1}+d_{g+}+\cdots d_{3g}\right)+\dots
 \]
 If $s$ is a multiple of $g$,  then the inequality in Equation (\ref{partition})  is an equality. 
If $s$ is not a multiple then, $g$, we have have a strict inequality because 
  the largest  $s-g\lfloor \frac{s}{g}\rfloor$ differences  have been omitted.
   By the definition of $A$,  at least one of the the first $g+1$ terms $d_{1},d_{2},\dots ,d_{g+1}$ has to be greater than one. 
 Therefore, inside the second pair of parentheses, all $g$ differences must be greater than or equal to 2.  
 In general, the $b$'th pair of parentheses encloses $g$ differences that are each greater than or equal to $b$.
 Combining this fact with Equation (\ref{upperb-sigmal}), we get
  \begin{equation}
  \label{bothbounds}
 n{\ell+1\choose 2} >  \sigma_{\ell}\geq \sum\limits_{b=1}^{\lfloor \frac{s}{g}\rfloor -1} bg ={\lfloor \frac{s}{g}\rfloor\choose 2}g \geq  (\frac{s}{g}-1)^{2}\frac{g}{2}.
  \end{equation}
  Provided $s>g$, we can
solve for $s$ and  get
\begin{equation}
  \label{solve4s}
s< \sqrt{gn}\sqrt{\ell(\ell+1)}+g.
  \end{equation}
       Recall that $ s=k\ell -{\ell+1\choose 2}.$ Plugging this into   Equation (\ref{solve4s}), and solving for $k$ we get
  \begin{equation}
  \label{kbound}
  k< \sqrt{gn}\sqrt{1+\frac{1}{\ell}}+\frac{\ell+1}{2}+\frac{g}{\ell}.
  \end{equation}
 For each $n$, we can choose a $k(n)$-element set $A=A_{n}$, where  $k(n)=\alpha_{g}(n)$. 
 For this choice of $k$, define
 $\ell(n) =\lfloor k^{1/2}\rfloor $.  It is known (see the next section) that $\alpha_{g}(n)>c_{g}\sqrt{n}$
 for a positive constant $c_{g}$.
 Therefore $\ell(n)\rightarrow\infty$ as $n\rightarrow\infty$.
 Using calculus, it is easy to check that $\sqrt{1+u}<1+u$ for all $u>0$. 
 In particular, for $u=\frac{1}{\ell}$ we have  $\sqrt{1+\frac{1}{\ell}}\leq 1+\frac{1}{\ell}$
Combining this with Equation (\ref{kbound}), we get 
\[  \alpha_{g}(n)\leq \sqrt{gn}+\frac{\sqrt{gn}}{\ell}+ \frac{(\ell+1)}{2}+\frac{g}{\ell}\]
\[ =\sqrt{gn}\left( 1+ \frac{1}{\ell}+ \frac{(\ell+1)}{2\sqrt{gn}}+\frac{g}{\ell \sqrt{gn}}\right)\]
Using the asymptotic notation that was specified at the the end of Section 1 (after Theorem \ref{alpha-theorem}), we have
$\alpha_{g}(n)\leq \sqrt{gn}\left(1+o_{g}(1)\right)$ as $n\rightarrow\infty$.
 \end{proof}

  \subsection{Lower bound for $\alpha_g(n)$}

   To prove the lower bound in    Theorem  \ref{alpha-theorem}
   We will construct a set $A \subset [n]$ of size $(1-o(1))\sqrt{gn}$ that satisfies $r_{A-A}(x) \leq g$ for all nonzero $x$. Our set is the same set as was constructed in \cite{GTT}, where it was shown that $r_{A+A}(x) \leq 2g$. We include the proof that $r_{A-A}(x) \leq g$ for completeness, though it is essentially the same as in \cite{GTT}.
   We start with a Bose-Chowla Sidon set \cite{Bose-Chowla}. Let $g$ be a fixed integer and $q$ be a prime power such that $q\equiv 1 \pmod{g}$. Let $\theta$ generate the multiplicative group of $\mathbb{F}_{q^2}$ and let $\mathbb{F}_q$ denote the subfield of order $q$ in $\mathbb{F}_{q^2}$.  Define the set 
  \[
  B_q = \{a\in \mathbb{Z}/(q^2-1)\mathbb{Z}: \theta^a - \theta \in \mathbb{F}_q\}.
  \]
  Bose and Chowla \cite{Bose-Chowla} showed that $B_q$ is a Sidon set of size $q$ in $\mathbb{Z}/(q^2-1)\mathbb{Z}$. Let $H$ be the subgroup of  $\mathbb{Z}/(q^2-1)\mathbb{Z}$ generated by $\frac{q^2-1}{g}$ and note that $H$ is a subgroup of order $g$. We will use the following lemma which is already known (see Lemma 2.2 of \cite{GTT}, Lemma 2.1 of \cite{BDT}, Lemma 3.1 of \cite{GTT}).\vskip0cm
  
  \begin{lemma}
  (\cite{BDT,GTT})\label{bc quotient difference set}
  Let $g$ be fixed, $q$ a prime power congruent to $1$ mod $g$ and $B_q$ and $H$ defined as above. 
  (The set $A$ is defined below in the statement of Proposition \ref{alpha-prop}.)
  Then 
  \[
 ( A-A) \cap H = \{0\}.
  \]
  \end{lemma}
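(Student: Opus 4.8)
The plan is to transport the entire problem into the cyclic group $\mathbb{F}_{q^2}^*$ via the discrete logarithm $a\mapsto \theta^a$, turning the additive statement ``$h\in (A-A)\cap H$'' into a multiplicative identity among field elements of the special form $\theta+c$ with $c\in\mathbb{F}_q$. Since the set $A$ of Proposition \ref{alpha-prop} coincides with (the integer representatives of) the Bose--Chowla set $B_q$, differences in $A$ reduce to $B_q-B_q$ modulo $q^2-1$, so it suffices to prove $(B_q-B_q)\cap H=\{0\}$ inside $\mathbb{Z}/(q^2-1)\mathbb{Z}$.

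First I would record the two structural facts I need. By the definition of $B_q$, for each $a\in B_q$ we may write $\theta^a=\theta+c_a$ for a unique $c_a\in\mathbb{F}_q$, and $\theta\notin\mathbb{F}_q$ because $\theta$ has order $q^2-1>q-1$. Next I would identify $H$ multiplicatively: setting $\zeta=\theta^{(q^2-1)/g}$, the image of $H$ under $a\mapsto\theta^a$ is exactly $\langle\zeta\rangle$, the group of $g$-th roots of unity, and $\zeta$ has order precisely $g$. The crucial point --- and the only place the hypothesis $q\equiv 1\pmod g$ is used --- is that $g\mid q-1$ forces all $g$ of these roots of unity to lie already in $\mathbb{F}_q$: indeed $\gcd(g,q-1)=g$, so $x^g=1$ has $g$ solutions in $\mathbb{F}_q$, which accounts for all of them in $\mathbb{F}_{q^2}$. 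Hence $\zeta\in\mathbb{F}_q^*$.

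Then I would carry out the main computation. Suppose $h=b-b'\in H$ with $b,b'\in B_q$, and write $h\equiv k\,(q^2-1)/g$ with $0\le k<g$, so that $\theta^h=\zeta^k\in\mathbb{F}_q$. Exponentiating $h=b-b'$ gives $\theta+c_b=\zeta^k(\theta+c_{b'})$, which rearranges to $(1-\zeta^k)\theta=\zeta^k c_{b'}-c_b$, whose right-hand side lies in $\mathbb{F}_q$. If $\zeta^k\ne 1$, then $1-\zeta^k\in\mathbb{F}_q^*$ and we could solve $\theta=(\zeta^k c_{b'}-c_b)/(1-\zeta^k)\in\mathbb{F}_q$, contradicting $\theta\notin\mathbb{F}_q$. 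Therefore $\zeta^k=1$, and since $\zeta$ has order exactly $g$ with $0\le k<g$ we conclude $k=0$, i.e.\ $h=0$.

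The argument is short, and the individual steps are routine field algebra; the step I expect to be the crux is the observation that $q\equiv 1\pmod g$ pins the $g$-th roots of unity down inside the subfield $\mathbb{F}_q$. Without that, $\zeta^k$ would be a genuinely $\mathbb{F}_{q^2}$ quantity and the final contradiction (solving for $\theta$ over $\mathbb{F}_q$) would collapse. The one piece of bookkeeping to settle cleanly at the outset is the precise relation between the proposition's set $A$ and $B_q$, so that the reduction to $(B_q-B_q)\cap H=\{0\}$ is fully justified.
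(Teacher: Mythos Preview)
The paper does not supply its own proof of this lemma; it is stated with citations to \cite{BDT,GTT} and then used. Your argument is correct and is in fact the standard one found in those references: exponentiating via $a\mapsto\theta^a$ turns a putative nonzero element of $(B_q-B_q)\cap H$ into an equation $(1-\zeta^k)\theta\in\mathbb{F}_q$ with $\zeta^k\in\mathbb{F}_q$ (this last inclusion being exactly where $q\equiv 1\pmod g$ enters), forcing $\theta\in\mathbb{F}_q$, a contradiction.

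Your closing caveat about the bookkeeping is apt: the paper's statement is slightly informal, since the set it calls $A$ in Proposition~\ref{alpha-prop} actually lives in $\mathbb{Z}/N\mathbb{Z}$ (the quotient by $H$), not in $\mathbb{Z}/(q^2-1)\mathbb{Z}$ where $H$ sits, so the literal expression ``$(A-A)\cap H$'' is a mild abuse. The only way the lemma is used---to show $a+H=b+H$ with $a,b\in B_q$ forces $a=b$---is precisely the assertion $(B_q-B_q)\cap H=\{0\}$, which is what you prove.
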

  
   \begin{proposition}
   \label{alpha-prop}
  There exists a set $A\subset [n]$ with $|A| = (1-o(1))\sqrt{gn}$ and $r_{A-A}(x) \leq g$ for all nonzero $x$.
  \end{proposition}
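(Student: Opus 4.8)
The plan is to realize the desired set as the image of the Bose--Chowla Sidon set $B_q$ under the quotient map that kills $H$, trading the Sidon property (every nonzero difference occurs \emph{at most once} in $G := \mathbb{Z}/(q^2-1)\mathbb{Z}$) against the fact that $|H| = g$. Concretely, write $N = (q^2-1)/g$ and let $\pi : G \to \mathbb{Z}/N\mathbb{Z}$ be reduction modulo $N$, so that $\ker \pi = H$. I would let $\bar A = \pi(B_q) \subseteq \mathbb{Z}/N\mathbb{Z}$ and take $A$ to be the set of integer representatives of $\bar A$ in $\{1,\dots,N\}$, so that $A \subseteq [N]$. By Lemma \ref{bc quotient difference set}, no two distinct elements of $B_q$ differ by a nonzero element of $H$, so $\pi$ is injective on $B_q$ and hence $|A| = |B_q| = q$.

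Next I would bound the representation function. Fix a nonzero integer $x$; since $A \subseteq [N]$ we have $0 < |x| < N$, so $x$ reduces to a nonzero $\bar x \in \mathbb{Z}/N\mathbb{Z}$, and because reduction modulo $N$ is injective on $\{1,\dots,N\}$ we get $r_{A-A}(x) \le r_{\bar A - \bar A}(\bar x)$. Each ordered pair contributing to the right-hand side corresponds to a pair $(b_1,b_2) \in B_q \times B_q$ with $b_1 - b_2 \in \pi^{-1}(\bar x)$, which is a coset of $H$ of size $g$ that avoids $0$ (as $\bar x \neq 0$). Since $B_q$ is Sidon, each nonzero element of $G$ is realized as a difference $b_1 - b_2$ by at most one pair, so the map $(b_1,b_2) \mapsto b_1 - b_2$ is an injection from the contributing pairs into this coset of size $g$. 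Hence $r_{A-A}(x) \le g$ for every nonzero $x$, as required.

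It remains to pin down the asymptotics, and this is where the real content lies. We have $|A| = q$ and $N = (q^2-1)/g \le n$, which forces $q \le \sqrt{gn+1}$; the substance is the matching lower bound $q = (1-o(1))\sqrt{gn}$. The main obstacle is this number-theoretic step: for every fixed $\epsilon > 0$ and all large $n$, I need a prime power $q \equiv 1 \pmod g$ lying in the window $[(1-\epsilon)\sqrt{gn},\, \sqrt{gn}]$ (and then I choose the largest such $q$ with $N \le n$). For fixed $g$ this follows from the prime number theorem for arithmetic progressions, since the number of primes $\equiv 1 \pmod g$ in such a window tends to infinity; choosing $q$ maximal subject to $N \le n$ then yields $q = (1-o(1))\sqrt{gn}$ and therefore $|A| = (1-o(1))\sqrt{gn}$.

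Summarizing the order of operations: first choose $q$ by the prime-distribution argument to control the size; then define $A$ as the lift of $\pi(B_q)$ and invoke Lemma \ref{bc quotient difference set} to certify $|A| = q$; then run the Sidon-plus-coset counting to bound $r_{A-A}(x)$; finally assemble the asymptotics. Besides the prime-distribution input, the one point demanding care is the clean passage from cyclic differences in $\mathbb{Z}/N\mathbb{Z}$ back to genuine integer differences in $[n]$, i.e. ensuring that no integer wraparound inflates the count — this is exactly what the bound $0 < |x| < N$ together with injectivity of reduction on $\{1,\dots,N\}$ secures.
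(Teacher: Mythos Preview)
Your proof is correct and follows essentially the same route as the paper's: push the Bose--Chowla Sidon set $B_q$ through the quotient by $H$, use Lemma~\ref{bc quotient difference set} to see no collapse occurs, bound $r_{A-A}$ by the Sidon property together with $|H|=g$, and invoke primes in arithmetic progressions (the paper cites Siegel--Walfisz) to locate $q\equiv 1\pmod g$ with $q=(1-o(1))\sqrt{gn}$. Your treatment of the integer-versus-cyclic difference issue via injectivity of reduction on $\{1,\dots,N\}$ is exactly the point the paper handles by the remark that integer representatives of a good $A\subset\mathbb{Z}/N\mathbb{Z}$ with $N\le n$ suffice.
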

  \begin{proof}
  First we note that if we can find $A \subset \mathbb{Z}/N\mathbb{Z}$ of size $(1-o(1))\sqrt{gn}$ with $r_{A-A}(x) \leq g$ for all nonzero $x$ and $N\leq n$, then a corresponding set of integer representatives will prove the theorem. As above, let $q$ be a prime power congruent to $1$ mod $g$, let $B_q$ be the Bose-Chowla Sidon set in $\mathbb{Z}/(q^2-1)\mathbb{Z}$ and let $H$ be the subgroup of $\mathbb{Z}/(q^2-1)\mathbb{Z}$ of order $g$ which is generated by $\frac{q^2-1}{g}$. Consider the quotient group $\Gamma:= [\mathbb{Z}/(q^2-1)\mathbb{Z}]/H$ and define a subset $A_H\subset \Gamma$ by $A_H = \{a+H: a\in B_q\}$. First we show that $A_H$ is the same size as $B_q$. If $a+H = b+H$ with $a,b\in B_q$, then $a-b\in H$ and so by Lemma \ref{bc quotient difference set} we have that $a \equiv b \pmod{q^2-1}$. Therefore $|A_H| = |B_q| = q$. 
  
  By the Siegel-Walfisz theorem we may, for all sufficiently large $n$,  choose a prime $q=q(n)$ such that $q \equiv 1\pmod{g} $ and 
  $\sqrt{(1-\epsilon)gn+1}\leq q\leq \sqrt{gn+1}$. Note that 
  $ (1-\epsilon)n\leq \frac{q^2-1}{g} \leq n$.  Define $N  = \frac{q^2-1}{g}$ and note that $\Gamma$ is isomorphic to the cyclic group $\mathbb{Z}/N\mathbb{Z}$. Let $A \subset \mathbb{Z}/N\mathbb{Z}$ be the set corresponding to $A_H\subset \Gamma$. So $|A| = |A_H| = q = \sqrt{gN+1} \sim \sqrt{gn}$. Hence we are done as long as we can show that $r_{A_H - A_H} (d+H) \leq g$ for any $d+H$ not the identity. Consider solutions to the equation 
  \begin{equation}\label{eq differences in quotient}
  (a + H) - (b+H) = d+H,
  \end{equation}
  where $d+H$ is not the identity and $(a+H), (b+H) \in A_H$. Equation \eqref{eq differences in quotient} implies that there is an $h\in H$ such that $a-b \equiv d + h \not\equiv 0 \pmod{q^2-1}$. Since $(a+H), (b+H) \in A_H$, we have that $a,b\in B_q$. Since $B_q$ is a Sidon set, for each $h$ there is at most $1$ ordered pair $(a,b) \in B_q \times B_q$ such that $a-b \equiv d+h \pmod{q^2-1}$. Since $H$ has order $g$, we know that
  Equation \eqref{eq differences in quotient} has at most $g$ solutions, completing the proof.

  \end{proof}

\section{Concluding Remarks}\label{conclusion section}
We end the paper by discussing what is known when one considers sums instead of differences and by sketching some of the applications of these problems to other areas. 
\subsection{Sums}
Naturally, one may also consider sums instead of differences and a large body of work has been done on this topic. Define the representation function 
\[
r_{A+A}(s) = |\{(a,a')\in A\times A: s = a+a;\}|
\]

Similarly to before, for $g$ a  natural number and $S\subseteq G$, define 
 \[
\nu_g(S) = \min \{|A|: A\subseteq  G, r_{A+A}(x) \geq g \mbox{ for all } x\in S\},
 \]
 and
\[
\beta_g(S) = \max \{|A|: A\subseteq  G, r_{A+A}(x) \leq g \mbox{ for all } x\in S\},
\]
and use $\nu_g(n)$ and $\beta_g(n)$ when $S = \{1,\cdots, n\}$. A set $A$ satisfying $r_{A-A}(x) \leq 1$ for all nonzero $x$ is equivalent to it satisfying $r_{A+A}(x) \leq 2$ for all $x$, and such sets are called {\em Sidon sets}. Sidon sets have been the subject of intensive study for almost a century since being introduced in \cite{sidon}; see the surveys of O'Bryant \cite{obryant} and Plagne \cite{plagne} for an extensive history. In particular, it is known that $\beta_2(n) \sim \sqrt{n}$ and determining the error term is a 500 dollar Erd\H{o}s question, see the papers \cite{BFR, CHO} for the current state of the art. For larger $g$, there is no such equivalence and perhaps surprisingly working with sums seems to be harder than working with differences.  Much work has been done on this topic, and there are many open and intriguing problems. For example, proving a version of Theorem \ref{Ntheorem} but for $\beta_g(n)$ was conjectured in \cite{CRV10} and many papers have been written giving bounds on the liminf and limsup (Section 1.1 of \cite{CRV10} contains references to much of the progress on $\beta_2(n)$ over the years). The function $\nu_2(n)$ has also been considered extensively, for example the paper \cite{GS} has been cited over 500 times. 

While most of the previous work has been done in the integers, considering other groups is also an intriguing and difficult problem. Just as we do in this paper, the first natural case to consider is when $G = \mathbb{F}_p^n$, and here there are many unanswered questions. When $n=1$, it is unknown for even a sequence of values of $p$ the asymptotics of $\beta_2(\mathbb{F}_p)$ or $\nu_2(\mathbb{F}_p)$ and determining these are listed in \cite{green100} in the discussion after Problem 31 and in Problem 33 respectively. 

For odd $p$ fixed and $n$ growing, more is known (see for example \cite{nagy, TW}) but not everything. Along the same lines as Theorem \ref{vector-space-theorem} and Conjecture \ref{parity conjecture}, it seems that the parity of $n$ plays an important role. In particular, it is known that for $n$ even,
\[
\beta_2\left(\mathbb{F}_p^n\right) \sim p^{n/2}.
\]
For $n$ odd, the best general bounds are roughly
\[
\left(\frac{1}{\sqrt{p}} + o(1) \right)p^{n/2} \leq \beta_2\left(\mathbb{F}_p^n\right) \leq (1+o(1)) p^{n/2}.
\]

When $p=2$, something similar is known, and closing these bounds is of great interest because of the problem's relationship to coding theory and cryptography, which we explain in the next subsection. Note that when $p=2$, one has $x+x=0$ for any $x$, and so the definitions must be revised to exclude these trivial solutions from consideration. Once this is done, the best known bounds are
\[
\left(1 + o(1) \right)2^{n/2} \leq \beta_2\left(\mathbb{F}_2^n\right) \leq (\sqrt{2}+o(1)) 2^{n/2},
\]
for $n$ even and 
\[
\left(\frac{1}{\sqrt{2}} + o(1) \right)2^{n/2} \leq \beta_2\left(\mathbb{F}_2^n\right) \leq (\sqrt{2}+o(1)) 2^{n/2},
\]
for $n$ odd. Even less is known about $\nu_2\left( \mathbb{F}_p^n\right)$ and it would be interesting to understand better. Next we discuss some applications of this problem.

\subsection{Coding Theory and Cryptography}
{\bf Binary codes with minimum distance $5$:} A $[n,k,d]_q$ code is a subspace of $\mathbb{F}_q^n$ of dimension $k$ with the property that any nonzero vector has at least $d$ entries which are nonzero. Such a code is {\em linear} (it is a subspace) and has {\em distance} $d$ (in the Hamming metric). Understanding how large a (linear) code may be with a fixed minimum distance is one of the fundamental problems in coding theory. There is an equivalence between Sidon sets in $\mathbb{F}_2^n$ and binary linear codes of minimum distance $5$. In short, given a Sidon set $A \subseteq\mathbb{F}_2^t$ containing the $0$ vector, the matrix $H$ whose columns are the vectors of $A$ except for the $0$ vector is a parity check matrix (a matrix such that its null space is the code) for a $[|A|-1, |A|-1-t, 5]_2$ code; the equivalence also goes in the other direction: given a parity check matrix $H$ for a $[n, k, 5]_2$ code, the columns form a Sidon set in $\mathbb{F}_2^{n-k}$ of size $n$. This equivalence is explained in detail in \cite{CP} and they use coding theory techniques to give the best known upper bounds on $\beta_2(\mathbb{F}_2^n)$.

\medskip

\noindent {\bf Covering codes of radius 2:} Given a code $\mathcal{C} \subseteq\mathbb{F}_p^n$, the {\em covering radius} $R$ of the code is the maximum distance of any vector in $\mathbb{F}_p^n$ to (the closest codeword in) the code \cite{GScodes}. One can check using the definitions that a linear $[n, n-r]_q$ code (a subspace of $\mathbb{F}_q^n$ of dimension $n-r$) with parity check matrix $H$ has covering radius at most $R$ if and only if any column $b \in \mathbb{F}_q^r$ can be written as a linear combination of at most $R$ columns of $H$.

If $H$ is the parity check matrix of an $[N, N-n]_2$ code that has covering radius $2$, then the set $A$ which is all of the columns of $H$ and the $0$ vector has the property that $A+A = \mathbb{F}_2^n$. That is, $A$ is an additive basis for $\mathbb{F}_2^n$. Going the other direction, if $A$ is a set containing the $0$ vector and $A+A$ is an additive basis for $\mathbb{F}_2^n$, then the matrix $H$ which has columns all of the nonzero vectors in $A$ will be a parity check matrix for a linear code with covering radius $2$.

By this equivalence, understanding $\nu_2(\mathbb{F}_2^n) = \eta_2(\mathbb{F}_2^n)$ is equivalent to understanding the smallest possible linear binary codes of covering radius $2$. For $p$ odd the problems are not equivalent, but there are implications. Given $A$ which is either an additive or difference basis of $\mathbb{F}_p^n$, the matrix with columns from $A$ is a parity check matrix for a code of covering radius $2$. Conversely, given $H$ a parity check matrix for a code of covering radius $2$, the set $A$ which is given by all scalar multiples of columns of $H$ will be both an additive basis and a difference basis for $\mathbb{F}_2^n$. The problem of determining the smallest possible linear code with covering radius $2$ has been studied extensively in coding theory, see for example \cite{bartoli1, bartoli2, davydov1, davydov3} and references therein.

\medskip

\noindent {\bf Almost perfect nonlinear functions:} A function $f: \mathbb{F}_2^n \to \mathbb{F}_2^n$ is called {\em almost perfect nonlinear} if for any $a, b\in \mathbb{F}_2^n$ with $a\not=0$, the equation $f(x+a) + f(x) = b$ has at most 2 solutions. Note that if $x$ is a solution then $x+a$ is also a solution, so the 2 cannot be reduced to 1. A function over $\mathbb{F}_2^n$ is almost perfect nonlinear if and only if the set $\lbrace (x, f(x)): x\in \mathbb{F}_2^n\rbrace$ is a Sidon set (see for example \cite{carlet2}). Almost perfect nonlinear functions have applications in cryptography \cite{carlet, nyberg} and so are studied in this context extensively. 
\vskip 20pt\noindent {\bf Acknowledgements.}\vskip0cm\noindent 
%\section*{Acknowledgments}
The authors thank I-Hwa Lee for introducing them and Lorenzo Ambrogi for inspiration.
\

\end{document}